\let\oldbibliography\thebibliography
\renewcommand{\thebibliography}[1]{%
  \oldbibliography{#1}%
  \setlength{\itemsep}{2pt}%
}
\newtheorem{theorem}{Theorem}[section]
\newtheorem{proposition}[theorem]{Proposition}
\newtheorem*{MainA}{Main Theorem A}
\newtheorem*{MainB}{Main Theorem B}
\def\pfsp{\hskip 1em}
\newcommand{\pff}[2]{\noindent{\it Proof~of~#1\,:}\pfsp #2 {\hfill $\Box$}}
\def \QD1 {\hfill $\spadesuit$}
\newcommand{\case}[2]{\smallskip {\bf Case #1\/:} {\it #2}}
\newcommand{\DF}[1]{{\bf #1\/}}
\newcommand{\set}[2]{\{#1 \;|\; #2 \}}
\newcommand{\ems}{\varnothing}
\newcommand{\sm}{\setminus}
\newcommand{\De}{\Delta}
\newcommand{\de}{\delta}
\newcommand{\pa}{\chi}
\newcommand{\gp}{\rho}
\newcommand{\cn}{\chi}
\newcommand{\ocn}{\overline{\chi}}
\newcommand{\cDG}{{\cal S}{\cal D}}
\newcommand{\cMG}{{\cal MG}}
\newcommand{\cRG}{{\cal RG}}
\newcommand{\cGD}{{\cal DG}}
\newcommand{\Ga}{\Gamma}
\newcommand{\cG}{{\cal G}}
\newcommand{\cP}{{\cal P}}
\newcommand{\CO}{{\cal CO}}
\newcommand{\CR}{{\rm Cri}}
\newcommand{\f}{\varphi}
\newcommand{\fin}{\varphi^{-1}}
\newcommand{\olm}{\overline{m}}
\newcommand{\nat}{\mathbb{N}}
\newcommand{\nato}{\mathbb{N}_0}
\newcommand{\ganz}{\mathbb{Z}}
\newcommand{\has}{\nabla}
\newcommand{\dis}{\boxplus}
\newcommand{\hal}{\nabla^{\ell}}
\newcommand{\halt}{\nabla^t}
\newcommand{\dil}{\boxplus^{\ell}}
\newcommand{\dit}{\boxplus^t}
\newcommand{\Cre}{\mathrm{Ext}}
\newcommand{\cre}{\mathrm{ext}}
\newcommand{\mdo}[3]{#1 \equiv #2 \, \mathrm{(mod} \; #3 \mathrm{)}}
\newcommand{\PP}  {{\sf P}}
\newcommand{\NP}  {{\sf NP}}
\newcommand{\NPH} {{\NP}-hard}
\newcommand{\jou}[4]{{\em #1} {\bf #2} (#3) #4.}
\def \JCTB {J. Combin. Theory Ser.~B}
\def \DM {Discrete Math.}
\numberwithin{equation}{section}
\begin{document}
%\title{\bf Point Partition Numbers and Critical Graphs}
%\title{\bf Critical graphs with respect to the point partition numbers}
%\title{Decomposable and indecomposable $\pa_t$-critical graphs}
%\title{Decomposable and indecomposable point partition number critical graphs}
\title{\bf Point partition numbers: decomposable and indecomposable critical graphs}

\author{Justus von Postel \qquad Thomas Schweser \qquad  Michael Stiebitz\\[0.5ex]
\small Institute of Mathematics\\[-0.8ex]
\small Technische Universit\"at Ilmenau\\[-0.8ex]
\small D-98684 Ilmenau, Germany\\
\small\tt \{justus.von-postel,thomas.schweser,michael.stiebitz\}@tu-ilmenau.de}
\date{}
\maketitle
\begin{abstract}
Graphs considered in this paper are finite, undirected and loopless, but we allow multiple edges. The point partition number $\pa_t(G)$ is the least integer $k$ for which $G$ admits a coloring with $k$ colors such that each color class induces a $(t-1)$-degenerate subgraph of $G$. So $\pa_1$ is the chromatic number and $\pa_2$ is the point aboricity. The point partition number $\pa_t$ with $t\geq 1$ was introduced by Lick and White. A graph $G$ is called $\pa_t$-critical if every proper subgraph $H$ of $G$ satisfies $\pa_t(H)<\pa_t(G)$. In this paper we prove that if $G$ is a $\pa_t$-critical graph whose order satisfies $|G|\leq 2\pa_t(G)-2$, then $G$ can be obtained from two non-empty disjoint subgraphs $G_1$ and $G_2$ by adding $t$ edges between any pair $u,v$ of vertices with $u\in V(G_1)$ and $v\in V(G_2)$. Based on this result we establish the minimum number of edges possible in a $\pa_t$-critical graph $G$ of order $n$ and with $\pa_t(G)=k$, provided that $n\leq 2k-1$ and $t$ is even. For $t=1$ the corresponding two results were obtained in 1963 by Tibor Gallai.
\end{abstract}

\noindent{\small{\bf Mathematics Subject Classification:} 05C15}

\noindent{\small{\bf Keywords:} Graph coloring, Point partition numbers, Critical graphs, Dirac join}

\section{Introduction}
Coloring theory for graphs plays a central role in discrete mathematics and has attracted a lot of attention over the past decades. However, coloring theory mainly focuses on the investigation of the chromatic number $\chi$. In studying the chromatic number $\chi$-critical graphs became an important tool since coloring problems for $\chi$ can very often be reduced to problems about $\chi$-critical graphs.  A graph $G$ is \DF{$\chi$-critical} if $\chi(H) < \chi(G)$ for each proper subgraph $H$ of $G$. The class of $\chi$-critical graphs was introduced and investigated by G. A. Dirac in the 1950s (see e.g. \cite{Dirac52,Dirac53,Dirac57,Dirac64}), and the topic of $\chi$-critical graphs has received much attention within the last six decades. In 1963 Gallai \cite{Gallai63a} and \cite{Gallai63b} published two fundamental papers related to the structure of $\chi$-critical graphs. In this paper he proves - among many other results - the following two remarkable theorems.

\begin{theorem} [Gallai 1963]
\label{theorem:gallai1}
Let $G$ be a $\chi$-critical graph of order $n$ and with $\chi(G)=k$ for $k\geq 2$. If $n\leq 2k-2$, then $G$ is obtained from the disjoint union of two non-empty subgraphs $G_1$ and $G_2$ of $G$ by joining each vertex of $G_1$ to each vertex of $G_2$ by exactly one edge.
\end{theorem}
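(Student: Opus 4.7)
I argue by contradiction. Assume $G$ is $\chi$-critical with $k := \chi(G)$ and $n := |G| \le 2k - 2$, yet $G$ cannot be written as a join of two non-empty subgraphs; equivalently, the complement $\bar G$ is connected. The objective is to construct a proper $(k-1)$-coloring of $G$, contradicting $\chi(G) = k$.

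\textbf{Setup and Kempe rigidity.} Fix any edge $e = xy$ of $G$. By criticality, $G - e$ admits a proper $(k-1)$-coloring $\varphi$, and since $\chi(G) = k$ we must have $\varphi(x) = \varphi(y)$; set $V_i := \varphi^{-1}(i)$ with $x, y \in V_1$. Among all such colorings choose one minimizing $|V_1|$. The size bound $\sum_i |V_i| = n \le 2(k-1)$ together with $|V_1| \ge 2$ forces, by averaging over the $k-1$ classes, either a singleton class $V_j = \{u\}$ with $j \ne 1$, or the extremal configuration $|V_1| = 2$ and $|V_j| = 2$ for every $j \ne 1$. The main tool is the standard Kempe-chain argument: for every $j \ne 1$, $x$ and $y$ must lie in the same connected component of the bipartite subgraph $(G - e)[V_1 \cup V_j]$, otherwise swapping colors $1$ and $j$ on $x$'s component yields a proper $(k-1)$-coloring of $G$. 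Applied to a singleton $V_j = \{u\}$, this forces the length-$2$ path $x - u - y$, so $u$ is adjacent to both $x$ and $y$; a secondary swap on a non-$x$ component, combined with the minimality of $|V_1|$, upgrades this to: $u$ is adjacent to every vertex of $V_1$ and has a neighbor in every other color class.

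\textbf{Completing the proof and main obstacle.} Now exploit the hypothesis that $\bar G$ is connected. Pick any non-edge $ab \in E(\bar G)$ and a $\bar G$-path $a = z_0, z_1, \dots, z_m = b$. The plan is to cascade Kempe recolorings along this path: each step either decreases $|V_1|$ (contradicting minimality) or converts a non-singleton class into a singleton, after which the rigidity conclusion applies to yield a $(k-1)$-coloring of $G$. The main obstacle is the final cascade: the rigidity pins down strong local adjacency constraints, but propagating them along a $\bar G$-path through swaps involving several color pairs requires a careful case analysis, and the extremal case $|V_1| = 2$ with all other classes of size $2$ has to be treated separately (e.g., by re-selecting the starting edge $e$ or by using length-$3$ paths $x - u - v - y$ in $(G - e)[V_1 \cup V_j]$). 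Once the cascade is in place, the incompatibility between "every singleton dominates $V_1$ and meets every other class" and "$\bar G$ contains an edge" yields the desired contradiction, forcing $\bar G$ to be disconnected and hence $G$ to decompose as a join.
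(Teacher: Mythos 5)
Your setup (Kempe rigidity for a minimum-$|V_1|$ coloring of $G-e$, the averaging argument giving a singleton class or the all-classes-of-size-two configuration, and the domination properties of a singleton) is correct as far as it goes, but the proof stops exactly where the theorem begins. The entire content of Gallai's result is the step you label ``the main obstacle'': you announce a cascade of Kempe recolorings along a path in $\overline{G}$, concede that it ``requires a careful case analysis'' and that the extremal case ``has to be treated separately,'' and then assert without argument that the rigidity facts are incompatible with $\overline{G}$ containing an edge. That final incompatibility is not a contradiction as stated: for any non-complete critical graph $\overline{G}$ contains edges, and for $C_5$ (where $k=3$, $n=2k-1$, $\overline{C_5}\cong C_5$ is connected) a singleton class dominating $V_1$ and meeting every other class coexists happily with edges of $\overline{G}$. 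So the contradiction must be extracted from the hypothesis $n\le 2k-2$ somewhere inside the cascade, and you have not shown where or how; nothing in the proposal explains how a swap along a $\overline{G}$-edge interacts with the minimality of $|V_1|$ to make progress. As written, this is a plan, not a proof.

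For comparison, the paper's route (Section 6, specialized to $t=1$) avoids the fixed edge $e$ entirely: it works with optimal $k$-colorings of $G$ in which a prescribed vertex $v$ is a singleton class (these exist by criticality applied to $G-v$), calls such a coloring $v$-extreme when the number of singleton classes is minimum, and compares two extreme colorings $\f_1,\f_2$ via the auxiliary hypergraph $H(\f_1,\f_2)$ whose edges are the non-singleton classes of either coloring. The key lemma (Claim~\ref{Chap3:Claim1}) is that a component of this hypergraph contains at most one singleton of $\f_1$, proved by rerouting along a shortest alternating path between two singletons to merge classes; Claims~\ref{Chap3:Claim2} and~\ref{Chap3:Claim3} then propagate the singleton along a path in $\overline{G}$, exactly the ``cascade'' you want, but with the bookkeeping made precise by the closed-set/component machinery. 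If you want to salvage your approach, the missing ingredient is an analogue of that comparison between \emph{two} colorings; a single coloring of $G-e$ with its Kempe components does not by itself carry enough structure to exploit the connectivity of $\overline{G}$.
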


\begin{theorem} [Gallai 1963]
\label{theorem:gallai2}
Let $n$ and $k$ be integers with $n=k+p$ and $2\leq p \leq k-1$. If $\cre(k,n)$ is the minimum number of edges in a $\chi$-critical graph having order $n$ and chromatic number $k$, then $\cre(k,n)={n \choose 2} - (p^2+1).$
\end{theorem}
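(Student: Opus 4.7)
The plan is to prove matching upper and lower bounds on $\cre(k,n)$, via an explicit construction for the upper bound and induction based on Theorem~\ref{theorem:gallai1} for the lower bound. Writing $M(G) := \binom{n}{2} - |E(G)|$ for the number of missing edges, the target is $M(G) \leq p^2 + 1$ for every $\chi$-critical graph $G$ of order $n = k+p$ with $\chi(G) = k$ and $2 \leq p \leq k-1$, matched by an explicit extremal graph.

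\smallskip
\emph{Upper bound construction.} For each admissible $(k,p)$, I would exhibit a $\chi$-critical graph realising $M(G) = p^2+1$ as follows. Take a base $(p+1)$-critical graph $B_p$ on $2p+1$ vertices with $M(B_p) = p^2+1$ --- $B_2 = C_5$ works directly, and for $p \geq 3$ one uses a Toft-type or circulant-type construction --- and form the Dirac join $G = B_p \ast K_{k-p-1}$. Since Dirac-joining $\chi$-critical graphs preserves $\chi$-criticality and the join itself introduces no missing edges, $G$ is $\chi$-critical with $\chi(G) = (p+1)+(k-p-1) = k$, $|G| = (2p+1)+(k-p-1) = k+p$, and $M(G) = M(B_p) = p^2+1$, yielding $|E(G)| = \binom{n}{2} - (p^2+1)$.

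\smallskip
\emph{Inductive step for the lower bound.} Fix $p \geq 2$ and induct on $k$. Given a $\chi$-critical $G$ with $\chi(G) = k$, $|G| = k+p$ and $k \geq p+2$, one has $|G| \leq 2k-2$, so Theorem~\ref{theorem:gallai1} produces a Dirac-join decomposition $G = G_1 \ast G_2$ with each $G_i$ $\chi$-critical, $\chi(G_i) = k_i$, $|G_i| = n_i$, $k_1+k_2 = k$, $n_1+n_2 = n$; missing edges are additive across the join, $M(G) = M(G_1)+M(G_2)$. Setting $p_i := n_i - k_i$ we get $p_1+p_2 = p$, and since no $\chi$-critical graph satisfies $|G| = \chi(G)+1$ (a standard consequence of Brooks' theorem), each $p_i \in \{0\}\cup\{2,3,\ldots\}$. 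If some $p_i = 0$, say $p_2 = 0$, then $G_2 = K_{k_2}$, $k_1 < k$ and $p_1 = p$, so the inductive hypothesis on $G_1$ gives $M(G) = M(G_1) \leq p^2+1$. If both $p_i \geq 2$, then $p_1 p_2 \geq 4$, and applying the inductive hypothesis to each $G_i$ (supplemented with a direct argument from $\delta(G_i) \geq k_i-1$ in the auxiliary case $p_i > k_i - 1$) yields $M(G) \leq (p_1^2+1)+(p_2^2+1) = p^2 - 2p_1 p_2 + 2 < p^2+1$ strictly.

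\smallskip
\emph{Base case and main obstacle.} The real work is the base case $k = p+1$, where $|G| = 2k-1$ and Theorem~\ref{theorem:gallai1} is not available. For $p = 2$ the only $\chi$-critical graph is $C_5$ and $M(C_5) = 5 = p^2+1$ is immediate. For $p \geq 3$, the bare Dirac degree bound $\delta(G) \geq k-1$ gives only $M(G) \leq (2k-1)(k-1)/2$, which exceeds $(k-1)^2+1$ whenever $k \geq 5$ and is therefore too weak. Closing this gap is where the principal difficulty lies: I would combine the degree bound with Gallai's structural theorem on the low-vertex subgraph of $G$ (whose blocks are complete graphs or odd cycles) and with the fact that $G$ admits no non-trivial Dirac-join decomposition, and then carry out a careful block-by-block count of non-edges incident to low-degree vertices to sharpen the bound to $(k-1)^2+1$. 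This boundary analysis is the main technical step of the whole proof.
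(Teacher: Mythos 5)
Your overall architecture is sound and in fact mirrors the scheme the paper uses for the even-$t$ analogue (Theorem~\ref{Chap8:Theorem:Gallai3}): an upper bound via a Dirac join $K_{k-p-1}\dis^1 B_p$ with an extremal base graph $B_p$ on $2p+1$ vertices, and a lower bound by induction through the decomposition of Theorem~\ref{theorem:gallai1}, using additivity of missing edges across the join and the convexity estimate $p_1^2+p_2^2+2<p^2+1$ when both parts are nontrivial. Two small remarks on that part: the base graph $B_p$ need not be conjured from a ``Toft-type or circulant-type'' construction --- the paper's class $\cGD(p+1)$ consists of $(p+1)$-critical graphs of order $2p+1$ with exactly $p^2+1$ missing edges, which is precisely what you need; and your induction should be set up with the hypothesis quantified over all admissible pairs $(k,p)$ simultaneously (induction on $n$, say), since the parts $G_i$ carry parameters $p_i\neq p$. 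Note also that the paper itself does not prove Theorem~\ref{theorem:gallai2}; it quotes Gallai and proves only the even-$t$ counterpart, precisely because there the boundary case $n=2k-1$ is trivial ($\tfrac{t}{2}K_{2k-1}$ attains the naive minimum-degree bound), which is false for $t=1$.

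The genuine gap is therefore exactly where you place it --- the base case $p=k-1$, $n=2k-1$ --- but the route you sketch for closing it would not work. Quantitatively, one must show $e(\overline{G})\leq (k-1)^2+1$, while $\de(G)\geq k-1$ gives only $e(\overline{G})\leq (2k-1)(k-1)/2$; the deficit is $(k-3)/2$, i.e.\ you must extract a total degree excess of $k-3$ above the Dirac bound. The low-vertex-subgraph theorem (Theorem~\ref{Chap3:Theo:Lowvertexsubgraph} with $t=1$), even exploited as sharply as Gallai himself did, yields at $n=2k-1$ a total excess bounded by an absolute constant (the resulting edge bound is $2e(G)\geq (k-1)n+\tfrac{k-3}{k^2-3}n+O(1)$, which saves roughly one edge of $\overline{G}$, not $\Theta(k)$). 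So a block-by-block count based on that theorem, even combined with indecomposability in the qualitative form you invoke, cannot reach $(k-1)^2+1$ for large $k$. What is actually needed --- and what Gallai used --- is the much stronger information behind Theorem~\ref{Chap5:Theo:Gallai-vertex-critical} and its refinement in Section~6: for an indecomposable critical $G$ of order $2k-1$, every vertex is the unique singleton class of some optimal coloring, so $\overline{G}$ is factor-critical, and the edge count is then obtained by matching-theoretic analysis of $\ocn$-vertex-critical complements. Until that case is supplied, your argument proves nothing new: the induction merely transports the bound from $n=2k'-1$ instances, which is where the entire content of the theorem resides.
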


Our main aim is to extend those two results to the point partition number introduced in 1970 by Lick and White \cite{LickW1970}. In what follows let $t$ be a positive integer. A graph $G$ is called \DF{strictly $t$-degenerate} if every non-empty subgraph $H$ of $G$ has a vertex $v$ whose degree in $H$ satisfies $d_H(v)\leq t-1$. The \DF{point partition number} $\pa_t(G)$ of a graph $G$ is the least non-negative integer $k$ for which $G$ has a coloring with a set of $k$ colors such that each color class induces a strictly $t$-degenerate subgraph of $G$. Note that $\pa_1=\chi$ and $\pa_2$ is referred to as the \DF{point aboricity}. A graph $G$ is called $\pa_t$-critical if every proper subgraph $H$ of $G$ satisfies $\pa_t(H)<\pa_t(G)$. Note that the graphs considered here may have parallel edges. In this paper we shall prove the following two results.

\begin{MainA}
Let $G$ be a $\pa_t$-critical graph of order $n$ and with $\pa_t(G)=k$ for $k\geq 2$. If $n\leq 2k-2$, then $G$ is obtained from the disjoint union of two non-empty subgraphs of $G$, say $G_1$ and $G_2$, by joining each vertex of $G_1$ to each vertex of $G_2$ by exactly $t$ parallel edges.
\end{MainA}

\begin{MainB}
Let $n$ and $k$ be integers with $n=k+p$ and $1\leq p \leq k-1$. If $\cre_t(k,n)$ is the minimum number of edges in a $\pa_t$-critical graph having order $n$ and $t$-chromatic number $k$, then $\cre_t(k,n)=t{n \choose 2}-\tfrac{t}{2}(2p+1)p$, provided that $t$ is even.
\end{MainB}

The rest of the paper is organized as follows. The second section gives a brief introduction to terminology for graphs. In the third section, we establish basic properties of $\pa_t$-critical graphs. In the fourth section, we introduce two fundamental constructions for critical graphs, the Haj\'os join and the Dirac join. In the fifth section we give some background information about Gallai's decomposition result (Theorem~\ref{theorem:gallai1}) and show that our first main result is a simple consequence of a decomposition result for hypergraphs. Our first main result is used in the sixth section to describe the structure of $\pa_t$-critical graphs whose order is near to $\pa_t$. The proof of the second main result is given in the seventh section. The last section contains some concluding remarks.

\section{Preliminaries}
We use the standard notation. In particular, $\nat$ denotes the set of all positive integers and $\nato=\nat \cup \{0\}$. For integers $k$ and $\ell$, let $[k,\ell]=\set{x\in \ganz}{k \leq x \leq \ell}$. In this paper, the term \DF{graph} refers to a finite undirected graph with multiple edges and without loops. For a graph $G$, we denote by $V(G)$ and $E(G)$ the \DF{vertex set} and the \DF{edge set} of $G$, respectively. The number of vertices of $G$ is called the \DF{order} of $G$ and is denoted by $|G|$. A graph $G$ is called \DF{empty} if $|G|=0$, in this case we also write $G=\ems$. For a vertex $v\in V(G)$ let $E_G(v)$ denote the set of edges of $G$ incident with $v$. Then $d_G(v)=|E_G(v)|$  is the \DF{degree} of $v$ in $G$. As usual, $\de(G)=\min_{v\in V(G)}d_G(v)$ is the \DF{minimum degree} and $\De(G)=\max_{v\in V(G)}d_G(v)$ is the \DF{maximum degree} of $G$. For two different vertices $u, v$ of $G$, let $E_G(u,v)=E_G(u)\cap E_G(v)$ be the set of edges between $u$ and $v$. If $e\in E_G(u,v)$ then we also say that $e$ is an edge of $G$ \DF{joining} $u$ and $v$. Furthermore, $\mu_G(u,v)=|E_G(u,v)|$ is the \DF{multiplicity} of the vertex pair $u,v$; and $\mu(G)=\max_{u\not=v}\mu_G(u,v)$ is the \DF{maximum multiplicity} of $G$. The graph $G$ is said to be \DF{simple} if $\mu(G)\leq 1$. For $X,Y \subseteq V(G)$, denote by $E_G(X,Y)$ the set of all edges of $G$ joining a vertex of $X$ with a vertex of $Y$, and put $E_G(X) = E_G(X,X)$. If $G'$ is a \DF{subgraph} of $G$, we write $G'\subseteq G$. The \DF{subgraph} of $G$ \DF{induced by} the vertex set $X$ with $X\subseteq V(G)$ is denoted by $G[X]$, i.e., $V(G[X])=X$ and $E(G[X])=E_G(X)$. Furthermore, $G-X=G[V(G)\sm X]$. For a vertex $v$, let $G-v=G-\{v\}$. For $F\subseteq E(G)$, let $G-F$ denote the subgraph of $G$ with vertex set $V(G)$ and edge set $E(G)\sm F$. For an edge $e\in E(G)$, let $G-e=G-\{e\}$. We denote by $C_n$ the \DF{cycle} of order $n$ with $n\geq 2$, and by $K_n$ the \DF{complete graph} of order $n$ with $n\geq 0$.

In what follows let $t$ be a given positive integer. If $G$ is a graph, then $H=tG$ denotes the graph obtained from $G$ by replacing each edge $e$ of $G$ by $t$ parallel edges with the same two ends as $e$, that is, $V(H)=V(G)$ and for any two different vertices $u,v\in V(G)$ we have $\mu_H(u,v)=t\mu_G(u,v)$. The graph $H=tG$ is called a \DF{$t$-uniform inflation} of $G$. A graph $G$ is said to be \DF{strictly $t$-degenerate} if every non-empty subgraph $G'$ of $G$ has a vertex $v$ such that $d_{G'}(v)\leq t-1$. Let $\cDG_t$ denote the class of strictly $t$-degenerate graphs. Note that $\cDG_1$ is the class of edgeless graphs and $\cDG_2$ is the class of forests.

Let $G$ be a graph and let $\Ga$ be a set. A \DF{coloring} of $G$ with \DF{color set} $\Ga$ is a mapping $\f:V(G) \to \Ga$ that assigns to each vertex $v\in V(G)$ a \DF{color} $\f(v)\in \Ga$. For a color $c\in \Ga$, the preimage $\fin(c)=\set{v\in V(G)}{\f(v)=c}$ is called a \DF{color class} of $G$ with respect to $\f$. A subgraph $H$ of $G$ is called \DF{monochromatic} with respect to $\f$ if $V(H)$ is a subset of a color class of $G$ with respect to $\f$. A coloring $\f$ of $G$ with color set $\Ga$ is called an \DF{$\cDG_t$-coloring} of $G$ if for each color $c\in \Ga$ the subgraph of $G$ induced by the color class $\fin(c)$ belongs to $\cDG_t$. We denote by $\CO_t(G,k)$ the set of $\cDG_t$-colorings of $G$ with color set $\Ga=[1,k]$. The \DF{point partition number} $\pa_t(G)$ of the graph $G$ is defined as the least integer $k$ such that $\CO_t(G,k)\not=\ems$.

The graph classes $\cDG_t$ and the coloring parameters $\pa_t$ with $t\geq 1$ were first introduced and investigated in 1970 by Lick and White \cite{LickW1970}. Bollob\'as and Manvel \cite{BollobasM79} used the term \DF{$t$-chromatic number} for $\pa_t$. Note that $\pa_1$ equals the chromatic number $\chi$, and the parameter $\pa_2$ is also referred to as the \DF{point aboricity}. The point aboricity was introduced in 1968 by Hedetniemi \cite{Hedetniemi68}. Note that any $\cDG_t$-coloring of a graph induces a $\cDG_t$-coloring with the same color set of each of its subgraphs. Consequently, $\pa_t$ is a monotone graph parameter, that is, $G'\subseteq G$ implies $\pa_t(G') \leq \pa_t(G)$.
%if $H$ and $G$ are graphs, then
%
Furthermore, the components of a graph can be colored independently, so if $G\not= \ems$, then
\begin{equation}
\label{Chap2:Equ:chi-components}
\pa_t(G)=\max \set{\pa_t(G')}{G' \mbox{ is a component of } G}.
\end{equation}
Recall that a \DF{block} of a non-empty graph $G$ is a maximal connected subgraph $H$ of $G$ such that $H$ has no separating vertex. If we have an optimal $\cDG_t$-coloring for each block of $G$ and $t\in \{1,2\}$, then we can combine these colorings to obtain an optimal $\cDG_t$-coloring of $G$ by permuting colors in the blocks if necessary. So for every non-empty graph $G$ we have
\begin{equation}
\label{Chap2:Equ:Block}
\pa_t(G)=\max\set{\pa_t(H)}{H \mbox{ is a block of } G} \mbox{ provided that } t\leq 2.
\end{equation}
However, for $t\geq 3$ this is not true in general. Fig.~\ref{Chap1:Fig:Block} shows a graph $G$ with two isomorphic blocks $H_1$ and $H_2$ such that $\pa_3(G)=2$, but $\pa_3(H_1)=\pa_3(H_2)=1$.

\begin{figure}[htbp]
\centering

\includegraphics[height=2cm]{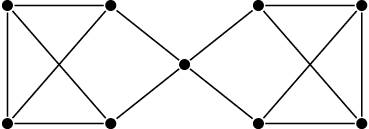}

\caption{A graph $G$ with $\pa_3(G)=2$.}
\label{Chap1:Fig:Block}   % Give a unique label
\end{figure}

Clearly, $\pa_t(G)=0$ if and only if $G=\ems$, and $\pa_t(G)\leq 1$ if and only if $G$ belongs to $\cDG_t$. It is well known that a graph belongs to $\cDG_t$ if and only if deleting step by step vertices whose actual degree is at most $t-1$ results in the empty graph. So the decision problem whether a graph $G$ satisfies $\pa_t(G)\leq 1$ belongs to the complexity class {\PP}. In general, however, the determination of the parameter $\pa_t$ is {\NPH}.

\section{The class of $\pa_t$-critical graphs}\label{section_construction}

In studying the $t$-chromatic number $\pa_t$, critical graphs are a useful concept. A graph $G$ is called \DF{$\pa_t$-critical} if $\pa_t(G')<\pa_t(G)$ for every proper subgraph $G'$ of $G$. To see why critical graphs are of great interest, let us consider a \DF{graph property} $\cP$, that is, a class of graphs closed under taking isomorphic copies. Suppose that $\cP$ is \DF{monotone} in the sense that $G' \subseteq G \in \cP$ implies $G'\in \cP$. Furthermore, consider a \DF{graph parameter} $\gp$ defined for the class $\cP$, that is, a mapping that assigns to each graph of $\cP$ a real number such that $\gp(G')=\gp(G)$ whenever $G'$ and $G$ are isomorphic graphs belonging to $\cP$. If we want to bound the $t$-chromatic number for the graphs of $\cP$ from above by the parameter $\gp$, then we can apply the critical graph method, provided that $\gp$ is \DF{monotone}, that is, $G'\subseteq G \in \cP$ implies $\gp(G') \leq \gp(G)$. The proof of the following proposition is easy and left to the reader.

\begin{proposition} Let $t\geq 1$ be a fixed integer, let $\cP$ be a monotone graph property and let $\gp$ be a monotone graph parameter defined for $\cP$. Then the following statements hold:
\begin{itemize}
\item[{\rm (a)}] For every graph $G\in \cP$ there exists a $\pa_t$-critical graph $H\in \cP$ such that $H\subseteq G$ and $\pa_t(H)=\pa_t(G)$.
\item[{\rm (b)}] If $\pa_t(H)\leq \gp(H)$ for every $\pa_t$-critical graph $H\in \cP$, then $\pa_t(G)\leq \gp(G)$ for every graph $G\in \cP$.
\end{itemize}
\label{Chap3:Prop:critical-method}
\end{proposition}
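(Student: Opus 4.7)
The proof is straightforward and relies only on the fact already noted earlier in the paper that $\pa_t$ is a monotone graph parameter. The plan is to handle part (a) by an iterative subgraph-deletion argument and then obtain part (b) as an immediate consequence.

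For part (a), I would argue as follows. Let $G\in\cP$. Either $G$ is already $\pa_t$-critical, in which case $H=G$ works, or, by definition, there exists a proper subgraph $G'\subsetneq G$ with $\pa_t(G')\geq \pa_t(G)$; monotonicity of $\pa_t$ then forces $\pa_t(G')=\pa_t(G)$, and monotonicity of $\cP$ guarantees $G'\in\cP$. Iterating this step produces a strictly decreasing chain of subgraphs inside $\cP$, each having the same $\pa_t$-value as $G$. Since $G$ is finite (as a graph with finitely many vertices and edges, even allowing parallel edges), this chain must terminate, and the terminal element $H\in\cP$ is $\pa_t$-critical with $\pa_t(H)=\pa_t(G)$.

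For part (b), apply part (a) to an arbitrary $G\in\cP$ to obtain a $\pa_t$-critical subgraph $H\in\cP$ with $H\subseteq G$ and $\pa_t(H)=\pa_t(G)$. The hypothesis yields $\pa_t(H)\leq \gp(H)$, and the monotonicity of $\gp$ combined with $H\subseteq G$ gives $\gp(H)\leq \gp(G)$. Concatenating these inequalities,
\[
\pa_t(G)=\pa_t(H)\leq \gp(H)\leq \gp(G),
\]
which is what we want.

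There is essentially no obstacle. The only subtlety worth mentioning explicitly is the termination of the chain in part (a); it works because the total number of edges (counted with multiplicity) strictly decreases at each step and is bounded, so the process cannot continue indefinitely. Everything else is direct bookkeeping with the two monotonicity hypotheses.
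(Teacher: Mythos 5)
Your proof is correct and is precisely the standard argument the authors intend here --- the paper explicitly declares this proposition ``easy and left to the reader,'' so there is no written proof to diverge from, and your reduction of (b) to (a) plus monotonicity of $\gp$ is exactly right. One small repair to your termination remark in (a): the number of edges need not strictly decrease at each step, since a proper subgraph may retain every edge and merely omit an isolated vertex; the quantity $|V(G)|+|E(G)|$ does strictly decrease for any proper subgraph, which gives the clean justification.
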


The $t$-chromatic number is a monotone graph parameter and it is easy to check that if we delete a vertex or an edge from a graph, then the $t$-chromatic number decreases by at most one.
As a consequence we obtain the following result.

\begin{proposition} Let $G$ be a graph and let $k\in \nat$. Then $\pa_t(G)\leq k-1$ if and only if there is no $\pa_t$-critical graph $H$ with $H\subseteq G$ and $\pa_t(H)=k$.
\label{Chap3:Prop:coloring=critcal=forbidden}
\end{proposition}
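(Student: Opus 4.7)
The plan is to handle the two directions separately. The forward direction is immediate from the monotonicity of $\pa_t$ recalled just before the statement: if $\pa_t(G) \le k-1$ and $H \subseteq G$, then $\pa_t(H) \le \pa_t(G) \le k-1 < k$, so no subgraph $H$ of $G$ with $\pa_t(H)=k$ can exist at all, let alone a $\pa_t$-critical one.

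For the backward direction I would prove the contrapositive: assuming $\pa_t(G) \ge k$, I construct a $\pa_t$-critical subgraph $H \subseteq G$ with $\pa_t(H) = k$. The argument proceeds in two steps. First, I want a subgraph $G^* \subseteq G$ with $\pa_t(G^*) = k$ \emph{exactly}. Take $G^*$ to be a subgraph of $G$ minimizing $|V(G^*)| + |E(G^*)|$ among all subgraphs satisfying $\pa_t(G^*) \ge k$; such a subgraph exists since $G$ itself qualifies. Suppose for contradiction that $\pa_t(G^*) \ge k+1$. Then $G^*$ is non-empty and contains some vertex $v$; by the fact recalled just before the proposition that deleting a vertex decreases $\pa_t$ by at most one, we have $\pa_t(G^*-v) \ge \pa_t(G^*) - 1 \ge k$, contradicting the minimality of $G^*$. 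Hence $\pa_t(G^*) = k$. Second, apply Proposition~\ref{Chap3:Prop:critical-method}(a) with $\cP$ taken to be the class of all graphs (trivially monotone) to obtain a $\pa_t$-critical graph $H \subseteq G^*$ with $\pa_t(H) = \pa_t(G^*) = k$; since $G^* \subseteq G$, this $H$ is the subgraph we need.

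There is essentially no obstacle here; the proposition is a routine packaging of the critical-graph method. The only mildly non-tautological ingredient is passing from a subgraph with $\pa_t \ge k$ to one with $\pa_t$ exactly $k$, and that step is precisely what the ``$\pa_t$ drops by at most one under a single vertex deletion'' remark immediately preceding the proposition is included to supply.
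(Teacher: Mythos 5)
Your proof is correct and follows essentially the same route as the paper: the forward direction is monotonicity of $\pa_t$, and the backward direction extracts a subgraph with $\pa_t$ exactly $k$ via the "drops by at most one under deletion" remark and then applies Proposition~\ref{Chap3:Prop:critical-method}(a). Your explicit minimality argument merely fills in a detail the paper leaves implicit.
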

\begin{proof} If $G$ contains a $\pa_t$-critical graph $G'$ with $\pa_t(G')=k$ as a subgraph, then $\pa_t(G)\geq \pa_t(G')=k$. Conversely, if $\pa_t(G)\geq k$, then it follows from the above remark that there is a subgraph $G'$ of $G$ with $\pa_t(G')=k$. By Lemma~\ref{Chap3:Prop:critical-method}(a), $G'$ and hence $G$ contains a $\pa_t$-critical graph $H$ with $\pa_t(H)=k$ as a subgraph.
\end{proof}

The following two propositions list some basic properties of $\pa_t$-critical graphs; the proofs are straightforward and left to the reader.

\begin{proposition} Let $G$ be a $\pa_t$-critical graph with $\pa_t(G)=k$ and $k \geq 1$. Then the following statements hold:
\begin{itemize}
\item[{\rm (a)}] If $v\in V(G)$ and $\f\in \CO_t(G-v,k-1)$, then $|E_G(v,\fin(c))|\geq t$ for every color $c\in [1,k-1]$.
\item[{\rm (b)}] If $e\in E_G(u,v)$ and $\f \in \CO_t(G-e,k-1)$, then $\f(u)=\f(v)$ and $\mu_{G-e}(u,v)\leq t-1$.
\item[{\rm (c)}] $\de(G)\geq t(k-1)$ and $\mu(G)\leq t$.

\item[{\rm (d)}] $|G|\geq k$ and equality holds if and only if $G=tK_k$.
\item[{\rm (e)}] $G$ is connected, and if $t\leq 2$, then $G$ has no separating vertex.
\end{itemize}
\label{Chap3:Prop:coloring-property}
\end{proposition}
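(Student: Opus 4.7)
The proof proceeds part-by-part, with (a) and (b) supplying the color-extension tools that drive the other three statements. The overall strategy is uniform: since $G$ is $\pa_t$-critical with $\pa_t(G)=k$, every proper subgraph of $G$ admits an $\cDG_t$-coloring with $k-1$ colors; if the claimed inequality in any of the five items failed, such a coloring could be extended to all of $G$, contradicting $\pa_t(G)=k$.

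For (a), suppose $|E_G(v,\fin(c))|\le t-1$ for some color $c$, and extend $\f$ to $G$ by setting $\f(v)=c$. The new color class $\fin(c)\cup\{v\}$ induces a graph in which $v$ has degree at most $t-1$. Any non-empty subgraph either contains $v$ (so $v$ itself has degree at most $t-1$) or is a subgraph of $G[\fin(c)]=(G-v)[\fin(c)]\in\cDG_t$ (so it has a vertex of degree at most $t-1$); thus $\fin(c)\cup\{v\}$ induces a graph in $\cDG_t$, yielding a $(k-1)$-coloring of $G$ and a contradiction. For (b), if $\f(u)\ne\f(v)$ then reinserting $e$ affects no color class, so $\f\in\CO_t(G,k-1)$, a contradiction; hence $\f(u)=\f(v)=c$. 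If in addition $\mu_{G-e}(u,v)\ge t$, then the two-vertex multigraph formed by $u,v$ together with those parallel edges is a subgraph of $(G-e)[\fin(c)]$ in which both vertices have degree at least $t$, violating the strict $t$-degeneracy of this color class.

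Parts (c), (d), (e) then follow quickly. For (c), pick any $\f\in\CO_t(G-v,k-1)$ and sum the bound from (a) over the $k-1$ colors to get $d_G(v)\ge t(k-1)$; and if some pair $u,v$ had $\mu_G(u,v)\ge t+1$, choosing $e\in E_G(u,v)$ and applying (b) to any $\f\in\CO_t(G-e,k-1)$ would give $\mu_{G-e}(u,v)=\mu_G(u,v)-1\ge t$, contradicting (b). For (d), if $|G|<k$ then assigning each vertex its own color yields an $\cDG_t$-coloring with fewer than $k$ colors; when $|G|=k$, the chain $t(k-1)\le\de(G)\le\De(G)\le(k-1)\mu(G)\le t(k-1)$ (using (c)) forces every pair of vertices to be joined by exactly $t$ parallel edges, so $G=tK_k$, while the converse direction is immediate. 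For (e), if $G$ were disconnected then \eqref{Chap2:Equ:chi-components} would produce a component $G'\subsetneq G$ with $\pa_t(G')=k$, contradicting criticality; the block version for $t\le 2$ is the same argument applied to \eqref{Chap2:Equ:Block}.

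No genuine obstacle arises: every step is a direct consequence of the definitions plus the two boxed formulas of Section~2. The only places that warrant a moment's care are the verification in (a) that attaching a single low-degree vertex to a member of $\cDG_t$ keeps the result in $\cDG_t$, and the dual observation in (b) that two vertices linked by at least $t$ parallel edges cannot coexist in a strictly $t$-degenerate color class. Together these package the $\pa_t$-criticality hypothesis into precisely the form needed by (c)--(e).
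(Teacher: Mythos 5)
Your proof is correct. The paper leaves this proposition to the reader as "straightforward," and your argument is exactly the intended one: use criticality to produce a $(k-1)$-coloring of $G-v$ (resp.\ $G-e$), show in (a) and (b) that the stated degree/multiplicity conditions are forced because otherwise the coloring would extend to $G$, and then derive (c), (d), (e) by summing over colors, the degree chain $t(k-1)\le\de(G)\le\De(G)\le(k-1)\mu(G)\le t(k-1)$, and the component/block formulas \eqref{Chap2:Equ:chi-components} and \eqref{Chap2:Equ:Block}.
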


\begin{proposition} Let $G$ be a simple graph. Then $\pa_t(tG)=\chi(G)$ and, moreover, $tG$ is $\pa_t$-critical if and only if $G$ is $\chi$-critical.
\label{Chap3:Prop:chi(G)=pat(tG)}
\end{proposition}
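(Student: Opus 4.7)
My plan rests on one structural observation that I would establish first: a map $\f: V(G) \to [1,k]$ is a $\cDG_t$-coloring of $tG$ if and only if it is a proper coloring of $G$. To see this, if $u$ and $v$ are joined by an edge of $G$, then in $tG$ there are exactly $t$ parallel edges between them, so if $\f(u)=\f(v)$ the color class $\fin(\f(u))$ would induce a subgraph containing the non-empty graph $tG[\{u,v\}]$, every vertex of which has degree $t$, contradicting strict $t$-degeneracy. Conversely, if $\f$ is proper on $G$, each class $\fin(c)$ is independent in $G$, so $tG[\fin(c)]$ is edgeless and trivially lies in $\cDG_t$. This observation immediately yields $\pa_t(tG) = \chi(G)$, and since $tG - v = t(G-v)$ it also handles the vertex-deletion half of the criticality equivalence.

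The subtle case is edge deletion. Let $e$ be an edge of $tG$ joining vertices $u, v$ that are adjacent in $G$, so that $tG - e$ still contains $t-1$ parallel edges between $u$ and $v$. I would show that $u$ and $v$ may now share a color in a $\cDG_t$-coloring of $tG - e$: a color class containing both induces a subgraph consisting of isolated vertices together with the $t-1$ parallel edges joining $u$ and $v$, in which both $u$ and $v$ have degree $t-1$, so the class lies in $\cDG_t$. The same analysis as above then shows that $\cDG_t$-colorings of $tG - e$ correspond bijectively to proper colorings of $G - uv$, yielding $\pa_t(tG - e) = \chi(G - uv)$.

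Both directions of the criticality biconditional then fall out of these two identities. If $G$ is $\chi$-critical with $\chi(G) = k$, I use the standard fact that $\chi(G - v) < k$ and $\chi(G - uv) = k - 1 < k$ for every vertex $v$ and every $G$-edge $uv$ to conclude $\pa_t(tG - v) < k$ and $\pa_t(tG - e) < k$, so $tG$ is $\pa_t$-critical. Conversely, if $tG$ is $\pa_t$-critical, the two correspondences give $\chi(G-v) < \chi(G)$ and $\chi(G-uv) < \chi(G)$ for all vertices and all $G$-edges, and since every proper subgraph of $G$ is contained in some $G-v$ or $G-uv$, $G$ is $\chi$-critical. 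I do not expect a real obstacle; the only point requiring attention is the edge-deletion analysis, where one must recognize that $\cDG_t$-colorings of $tG - e$ are parametrized by proper colorings of $G - uv$ rather than of $G$, because the pair $u,v$ is precisely the one that may now become monochromatic.
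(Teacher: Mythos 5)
Your proof is correct; the paper states this proposition without proof (``the proofs are straightforward and left to the reader''), and your argument is precisely the intended one. The key observations --- that the $\cDG_t$-colorings of $tG$ are exactly the proper colorings of $G$, that $tG-v=t(G-v)$, and that deleting one of the $t$ parallel edges between $u$ and $v$ makes the $\cDG_t$-colorings of $tG-e$ correspond to the proper colorings of $G-uv$ (since a class containing both $u$ and $v$ then induces a graph of maximum degree $t-1$) --- together with the fact that every proper subgraph sits inside some vertex- or edge-deleted subgraph, give a complete and correct proof of both the identity $\pa_t(tG)=\chi(G)$ and the criticality equivalence.
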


For integers $k, n\in \nato$, let $\CR_t(k)$ denote the class of $\pa_t$-critical graphs $G$ with $\pa_t(G)=k$ and let
$$\CR_t(k,n)=\set{G\in \CR_t(k)}{|G|=n}.$$
Since a graph $G$ satisfies $\pa_t(G)=0$ if and only if $G=\ems$, and $\pa_t(G)\geq 1$ if and only if $V(G)\not=\ems$, it follows from Proposition~\ref{Chap3:Prop:coloring=critcal=forbidden} that
$\CR_t(0)=\{\ems\}$ and $\CR_t(1)=\{K_1\}$. Let $\cRG_t$ denote the class of connected $t$-regular graphs. Then it is easy to check that $\cRG_t\subseteq \CR_t(2)$ and
$$\CR_2(2)=\cRG_2=\set{C_n}{n\geq 2}.$$
K\"onig's characterization of bipartite graphs implies that
$$\CR_1(3)=\set{C_n}{\mdo{n}{1}{2}}.$$
For any fixed $k\geq 3$, a good characterization of the class $\CR_t(k)$ seems to be unlikely.

While the class of $\chi$-critical graphs has attracted a lot of attention, this is not the case for  the class of $\pa_t$-critical graphs with $t\geq 2$. The papers by Kronk and Mitchen \cite{KronkM75}, by Bollob\'as and Harary \cite{BollobasH74}, by Mihok \cite{Mihok81}, and by \v{S}krekovski \cite{Skrekovski2002} are all devoted to the structure of $\pa_2$-critical simple graphs.  The two papers by Thomason \cite{Thomason79} \cite{Thomason82} deal with $\pa_t$-critical simple graphs for $t\geq 1$, and
the papers by Schweser \cite{Schweser2019} and by Schweser and Stiebitz \cite{SchweserS2019} contain some results about $\pa_t$-critical graphs having multiple edges and with arbitrary $t\geq 1$.

If we want to check whether a given graph is $\pa_t$-critical, it suffices to investigate all edge deleted graphs. This follows from the following trivial result.

\begin{proposition} Let $G$ be a graph and let $k\geq 2$ be an integer. Then $G\in \CR_t(k)$ if and only if $\de(G)\geq 1$ and $\pa_t(G- e)<k \leq \pa_t(G)$ for every edge $e\in E(G)$.
\label{Chap3:Prop:critcal=edgedeleted}
\end{proposition}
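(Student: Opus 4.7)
The plan is to prove the biconditional by unpacking the definition of $\pa_t$-criticality and using two elementary facts: the monotonicity of $\pa_t$, and the remark cited earlier in the section that removing a single vertex or edge decreases $\pa_t$ by at most one.

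For the forward direction I assume $G \in \CR_t(k)$ with $k \geq 2$. The inequality $\pa_t(G-e) < k \leq \pa_t(G)$ is immediate: $\pa_t(G)=k$ by definition, and since $G-e$ is a proper subgraph of $G$, criticality yields $\pa_t(G-e) < \pa_t(G) = k$. To establish $\de(G)\geq 1$, I argue by contradiction. If some vertex $v$ were isolated, then since $\pa_t(K_1)=1$ and $\pa_t(G)=k\geq 2$ force $|G|\geq 2$, the graph $G-v$ is a non-empty proper subgraph. Any $\cDG_t$-coloring of $G-v$ extends to one of $G$ by placing $v$ in an arbitrary existing color class — adding an isolated vertex preserves strict $t$-degeneracy for any $t\geq 1$. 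Hence $\pa_t(G) \leq \pa_t(G-v) < \pa_t(G)$, a contradiction.

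For the backward direction I assume $\de(G)\geq 1$ and $\pa_t(G-e) < k \leq \pa_t(G)$ for every edge $e$. The assumption $\pa_t(G)\geq k \geq 2$ rules out $G$ being edgeless, so I may pick any edge $e$; then the one-edge-decrease bound yields $\pa_t(G) \leq \pa_t(G-e)+1 \leq k$, so $\pa_t(G)=k$. To verify criticality I must show $\pa_t(G') < k$ for every proper subgraph $G' \subsetneq G$. Any such $G'$ must miss at least one edge of $G$: if $V(G')\subsetneq V(G)$, then by $\de(G)\geq 1$ every missing vertex has an incident edge in $G$ which cannot lie in $G'$; otherwise $V(G')=V(G)$ forces $E(G')\subsetneq E(G)$ directly. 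Picking such a missing edge $e$, one checks that $G' \subseteq G-e$, and monotonicity gives $\pa_t(G') \leq \pa_t(G-e) < k$.

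The proof is entirely routine — the authors themselves label the proposition trivial — and the only place requiring a moment of care is the exclusion of isolated vertices in the forward direction, which is dispatched by the one-line extension argument above.
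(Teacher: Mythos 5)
Your proof is correct and complete; the paper itself omits the argument as trivial, and what you supply is exactly the routine verification one would expect, including the two points that actually need care (excluding isolated vertices via the extension of a coloring of $G-v$, and showing every proper subgraph is contained in some $G-e$ using $\de(G)\geq 1$). Nothing to add.
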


Let $G$ be a $\pa_t$-critical graph with $\pa_t(G)=k$ and $k\geq 1$. By Proposition~\ref{Chap3:Prop:coloring-property}(c), $\de(G)\geq t(k-1)$, which
leads to a natural way of dividing the vertices of $G$ into two classes.
The vertices of $G$
having degree $t(k-1)$ are called \DF{low vertices}
of $G$, and the remaining vertices are called \DF{high vertices} of $G$.
So any high vertex of $G$ has degree at least $t(k-1)+1$ in $G$. Furthermore, the subgraph of $G$ induced by its low vertices is called the \DF{low vertex subgraph} of $G$. For $\chi$-critical graphs, this classification is due to Gallai \cite{Gallai63a}.
The following two results due to Schweser \cite[Theorems 3 and 5]{Schweser2019} (with $\cP=\cDG_t$ and $r=t$) generalizes Gallai's theorem about the structure of the low vertex subgraph of $\chi$-critical graphs and gives a Brooks-type result for $\pa_t$.

\begin{theorem} [Schweser 2019]
Let $G$ be a $\pa_t$-critical graph with $\pa_t(G)=k$ and $k\geq 1$, and let $B$ be a block of the low vertex subgraph of $G$. Then $B=sK_n$ with $1\leq s\leq t$ and $n\geq 1$, or $B=sC_n$ with $1\leq s\leq t$ and $n\geq 3$ odd, or $B$ is a connected $t$-regular graph, or $B\in \cDG_t$ and $\De(B)\leq t$.
\label{Chap3:Theo:Lowvertexsubgraph}
\end{theorem}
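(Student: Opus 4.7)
The plan is to assume, for contradiction, that $B$ violates all four conclusions and then build a $\cDG_t$-coloring of $G$ with $k-1$ colors, contradicting $\pa_t(G)=k$. Every $v\in V(B)$ is a low vertex, so $d_G(v)=t(k-1)$ and hence $d_B(v)\le t(k-1)$. First suppose $\Delta(B)\le t$. If $B\in\cDG_t$ then the fourth conclusion holds, so assume otherwise and pick a non-empty $H\subseteq B$ with $\de(H)\ge t$ and $|V(H)|$ minimum. For the induced subgraph $H':=B[V(H)]$, one has $t\le \de(H)\le \de(H')\le \Delta(H')\le \Delta(B)\le t$, so $H'$ is $t$-regular. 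Because $\Delta(B)\le t$, no vertex of $V(H')$ has a $B$-neighbor in $V(B)\ssm V(H')$; combined with the connectedness of $B$ this forces $V(H')=V(B)$, so $B$ is $t$-regular and we land in the third conclusion.

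Now suppose $\Delta(B)\ge t+1$, and aim to show $B=sK_n$ with $n\ge 2$ or $B=sC_n$ with $n\ge 3$ odd for some $s\in [1,t]$. Fix $v_0\in V(B)$ with $d_B(v_0)=\Delta(B)$. By $\pa_t$-criticality of $G$ there exists $\f\in \CO_t(G-v_0,k-1)$. Proposition~\ref{Chap3:Prop:coloring-property}(a) gives $|E_G(v_0,\fin(c))|\ge t$ for each color $c\in[1,k-1]$, and since $d_G(v_0)=t(k-1)$ this must be an equality for every $c$. Consequently, if we can produce any $\f'\in \CO_t(G-v_0,k-1)$ such that $|E_G(v_0,(\f')^{-1}(c^*))|\le t-1$ for some color $c^*$, then setting $\f'(v_0)=c^*$ extends $\f'$ to a member of $\CO_t(G,k-1)$---a contradiction.

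The tool for producing alternative colorings is the Kempe swap: for distinct colors $c_1,c_2$ and any connected component $C$ of $G[\fin(c_1)\cup\fin(c_2)]$, interchanging $c_1\leftrightarrow c_2$ on $C$ keeps the coloring in $\CO_t(G-v_0,k-1)$, since strict $t$-degeneracy of a color class is determined by its induced subgraph alone. Such a swap changes $|E_G(v_0,\fin(c))|$ only via $v_0$-neighbors inside $C$; by the preceding paragraph no Kempe swap may strictly decrease any such count, so every Kempe chain through a $B$-neighbor of $v_0$ must be \emph{balanced}: the total multiplicity of $v_0$-edges into $C\cap\fin(c_1)$ equals that into $C\cap\fin(c_2)$. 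Propagating this rigidity around the 2-connected block $B$ forces every edge of $B$ to carry the same multiplicity $s\le t$, so $B=sH$ for a simple graph $H$; Proposition~\ref{Chap3:Prop:chi(G)=pat(tG)} together with the classical Brooks theorem then pins down $H$ to be either a complete graph $K_n$ or an odd cycle $C_n$.

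The principal obstacle is the propagation step---deriving uniform edge multiplicity across $B$ from the local balanced-Kempe-chain condition. For $t=1$ this is exactly the classical Gallai-Brooks argument. For general $t$ the $\cDG_t$-slack (a color class may carry up to $t-1$ incident edges at each vertex before becoming dense) means that Kempe swaps along paths of parallel edges can interact subtly, and strict $t$-degeneracy must be monitored under composite swaps. This extra flexibility is precisely what introduces the multiplicity parameter $s\in[1,t]$ in the exceptional families $sK_n$ and $sC_n$, and handling it is the technical core of the proof in \cite{Schweser2019}.
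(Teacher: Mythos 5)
A preliminary remark: the paper does not prove this statement itself; it is imported verbatim from Schweser~\cite{Schweser2019}, so there is no in-paper proof to compare your attempt against. Judged on its own, your attempt contains a genuine gap. The first half is fine: when $\De(B)\leq t$, your argument that a block not in $\cDG_t$ must be connected and $t$-regular is correct (the induced subgraph $B[V(H)]$ is squeezed between $\de\geq t$ and $\De\leq t$, and saturation of degrees plus connectedness of the block gives $V(H)=V(B)$). The setup for the main case $\De(B)\geq t+1$ is also sound as far as it goes: each count $|E_G(v_0,\fin(c))|$ equals exactly $t$, a Kempe swap on a component of $G[\fin(c_1)\cup\fin(c_2)]$ preserves membership in $\CO_t(G-v_0,k-1)$ (the new class induces a disjoint union of two strictly $t$-degenerate graphs), and therefore every such component must be balanced with respect to the edges it receives from $v_0$.

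The gap is everything after that. The step ``propagating this rigidity around the block forces every edge of $B$ to carry the same multiplicity $s$'' is asserted, not proved: the balance condition concerns only edges incident with the single deleted vertex $v_0$ and two-colored components of $G-v_0$ (most of which live outside $B$ and outside the low-vertex subgraph), and you give no mechanism for transporting it to pairs of vertices of $B$ far from $v_0$, nor for excluding a block in which different edges have different multiplicities while every vertex still meets each color class with total multiplicity exactly $t$. The concluding appeal to Proposition~\ref{Chap3:Prop:chi(G)=pat(tG)} together with Brooks' theorem is a non sequitur: that proposition concerns $t$-uniform inflations $tG$ of an entire $\chi$-critical graph, whereas here $B=sH$ with possibly $s<t$, and $B$ is merely a block of the low-vertex subgraph, not itself a critical graph; moreover Brooks' theorem says nothing about which simple graphs occur as such blocks --- for $t=1$ that classification is precisely Gallai's low-vertex-subgraph theorem, whose proof is itself the hard part of the argument you are trying to generalize. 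You concede this by calling the missing step ``the technical core of the proof in \cite{Schweser2019}''; deferring the core step to the very reference whose theorem is being proved means what you have is a plan, not a proof.
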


\begin{theorem}[Schweser 2019]
If $G$ is a connected graph, then $\pa_t(G)\leq \lceil \De(G)/t\rceil+1$ and equality holds only if $G=sK_{(t/s)p+1}$ for some integers $s,p$ with $1\leq s \leq t, p \geq 0, \mdo{t}{0}{s}$, or $G=tC_n$ for $n \geq 3$ odd, or $G$ is a connected $t$-regular graph.
\label{Chap3:Theo:Brooks}
\end{theorem}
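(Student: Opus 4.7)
By Proposition~\ref{Chap3:Prop:critical-method}(a), $G$ contains a $\pa_t$-critical subgraph $H$ with $\pa_t(H)=k:=\pa_t(G)$. Proposition~\ref{Chap3:Prop:coloring-property}(c) yields $\delta(H)\geq t(k-1)$, so $\Delta(G)\geq\Delta(H)\geq\delta(H)\geq t(k-1)$. Since $k-1$ is an integer, $k-1\leq\lceil\Delta(G)/t\rceil$, giving $\pa_t(G)\leq\lceil\Delta(G)/t\rceil+1$.

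\textbf{Equality forces $G$ to be critical and regular.} Assume $k=\lceil\Delta(G)/t\rceil+1$. Then $\Delta(G)\leq t(k-1)$, which combined with $\Delta(G)\geq t(k-1)$ forces $\Delta(G)=t(k-1)$ and hence $H$ is $t(k-1)$-regular. For every $v\in V(H)$ the chain $d_H(v)=t(k-1)=\Delta(G)\geq d_G(v)\geq d_H(v)$ collapses to $d_G(v)=d_H(v)$; so every $G$-edge at $v$ already lies in $H$, and by connectedness of $G$ we obtain $V(H)=V(G)$ and $E(H)=E(G)$. Consequently $G$ itself is a $\pa_t$-critical, connected, $t(k-1)$-regular graph in which every vertex is low.

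\textbf{Block classification.} Since the low vertex subgraph of $G$ coincides with $G$, Theorem~\ref{Chap3:Theo:Lowvertexsubgraph} says that every block of $G$ is of one of the types (i) $sK_n$, (ii) $sC_n$ with $n$ odd, (iii) connected $t$-regular, or (iv) a member of $\cDG_t$ with maximum degree at most $t$. The cases $k=1$ and $k=2$ are immediate: $K_1$ matches the first listed form (with $s=1,p=0$), and a connected $t$-regular $G$ matches the third form. For $k\geq 3$ I claim $G$ consists of a single block. Otherwise let $B_1$ be a leaf block of the block-cut tree of $G$, with unique cut vertex $u_1$, and pick $w\in V(B_1)\setminus\{u_1\}$; since $w$ is non-cut in $G$, $d_{B_1}(w)=d_G(w)=t(k-1)\geq 2t$, which rules out type (iv) and forces $B_1$ into one of the regular types (i)--(iii). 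Then $d_{B_1}(u_1)=t(k-1)$, but $u_1$ must lie in at least one further block contributing $\geq 1$ to its degree, giving $d_G(u_1)\geq t(k-1)+1$, a contradiction to regularity. Thus $G$ is a single block; type (iv) is excluded (it would give $\pa_t(G)\leq 1$), and the regularity of $G$ in types (i) and (ii) yields $s(n-1)=t(k-1)$, respectively $2s=t(k-1)$. In type (i) this forces $s\mid t$ and $n=(t/s)(k-1)+1$; in type (ii) it gives $s=t$ and $k=3$ (the subcase $s=t/2,k=2$ falls back into the connected $t$-regular family).

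\textbf{Converse and main obstacle.} Each listed graph is then seen to realize the bound. For $G=sK_{(t/s)p+1}$ with $s\mid t$, a color class $C$ induces $sK_{|C|}$, which lies in $\cDG_t$ iff $s(|C|-1)\leq t-1$, i.e., $|C|\leq t/s$; hence $\pa_t(G)=\lceil n/(t/s)\rceil=p+1$, and edge-criticality follows easily. For $G=tC_n$ with $n$ odd, Proposition~\ref{Chap3:Prop:chi(G)=pat(tG)} together with $\chi(C_n)=3$ gives both $\pa_t(G)=3$ and $\pa_t$-criticality. For a connected $t$-regular $G$, the upper bound gives $\pa_t(G)\leq 2$ while $\delta(G)=t$ prevents $G$ from being strictly $t$-degenerate, hence $\pa_t(G)=2$; criticality $G-e\in\cDG_t$ can be seen from the fact that any subgraph of $G-e$ with minimum degree $\geq t$ must exclude both endpoints of $e$ and yet be closed under $G$-neighborhoods, forcing a component of $G$ to be disjoint from $\{u,v\}$ and contradicting connectedness. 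The main technical obstacle is the single-block argument for $k\geq 3$: the irregular type (iv) must first be eliminated by exploiting a non-cut vertex inside a leaf block, after which the resulting sharp equality $d_{B_1}(u_1)=t(k-1)$ clashes with the cut-vertex hypothesis through the block-degree sum at $u_1$.
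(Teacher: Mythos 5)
Your proof is correct and follows essentially the same route as the paper: reduce to a $\pa_t$-critical subgraph that the degree count forces to be all of $G$ and $t(k-1)$-regular, invoke Theorem~\ref{Chap3:Theo:Lowvertexsubgraph} on the (full) low-vertex subgraph to classify the blocks, show $G$ is a single block of type $sK_n$ or $sC_n$, and verify the converse. One small caveat: in type (i) the equation $s(n-1)=t(k-1)$ alone does not force $s\mid t$ (e.g.\ $s=4$, $t=6$, $n=4$ would satisfy it with $k=3$); the divisibility only follows once you also impose $\pa_t(sK_n)=k$, which is exactly the combined condition the paper's ``easy to check'' refers to.
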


It is easy to show that in the above theorem we can replace "only if" by "if and only if". As a consequence the graphs listed in the above theorem are the only $\pa_t$-critical graphs having no high vertices. For the class of simple graphs Theorem~\ref{Chap3:Theo:Brooks} was obtained for $t=1$ by Brooks \cite{Brooks41} and for $t\geq 2$ by Bollob\'as and Manvel \cite{BollobasM79} as well as by Borodin \cite{Borodin76}.

\smallskip

A graph is \DF{$\pa_t$-vertex-critical} if $\pa_t(G')<\pa_t(G)$ for every proper induced subgraph $G'$ of $G$. Clearly, every $\pa_t$-critical graph is $\pa_t$-vertex-critical, but not conversely. Examples of $\chi$-vertex-critical graphs that are not $\chi$-critical were given by Dirac. Obviously, a graph $G$ is $\pa_t$-vertex-critical if and only if $\pa_t(G-v)<\pa_t(G)$ for every vertex $v\in V(G)$. Results about critical graphs can be often transformed into results about the larger class of vertex-critical graphs.

\begin{proposition} Let $G$ be a $\pa_t$-vertex-critical graph. Then $G$ contains a $\pa_t$-critical subgraph $G'$ with $\pa_t(G')=\pa_t(G)$ and any such subgraph has the same vertex set as $G$.
\label{Chap3:Prop:critcal=vertex-critical}
\end{proposition}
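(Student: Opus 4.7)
The plan is to handle the two assertions of the proposition separately; both follow directly from results already stated in the excerpt, so this is really a short verification rather than a substantive argument.

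For the existence of a $\pa_t$-critical subgraph $G'$ with $\pa_t(G')=\pa_t(G)$, I would simply invoke Proposition~\ref{Chap3:Prop:critical-method}(a) applied to the monotone graph property $\cP$ consisting of all graphs (and the trivial parameter, which we do not actually use for this part). That proposition yields a $\pa_t$-critical subgraph $H\subseteq G$ with $\pa_t(H)=\pa_t(G)$, and we set $G'=H$.

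For the claim that any such $G'$ satisfies $V(G')=V(G)$, I would argue by contradiction. Suppose there is a vertex $v\in V(G)\sm V(G')$. Then every edge of $G'$ has both endpoints in $V(G)\sm\{v\}$ and is an edge of $G$ (since $G'\subseteq G$), so in fact $G'\subseteq G-v$. By the monotonicity of $\pa_t$ this gives $\pa_t(G-v)\geq \pa_t(G')=\pa_t(G)$. On the other hand, since $G$ is $\pa_t$-vertex-critical, the defining property (equivalent to $\pa_t(G-v)<\pa_t(G)$ for every $v\in V(G)$, as noted in the paragraph preceding the proposition) yields $\pa_t(G-v)<\pa_t(G)$, a contradiction. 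Hence $V(G')=V(G)$.

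No step is really an obstacle here; the only point that deserves explicit mention is why $G'\subseteq G-v$ rather than merely $G'\subseteq G$, but this is immediate because $v\notin V(G')$ forces every edge of $G'$ to avoid $v$. I would keep the proof to a few lines.
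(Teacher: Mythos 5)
Your proof is correct and follows essentially the same route as the paper: existence via Proposition~\ref{Chap3:Prop:critical-method}(a), and the vertex-set claim by observing that a missing vertex $v$ would force $G'\subseteq G-v$, contradicting $\pa_t(G-v)<\pa_t(G)$. No issues.
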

\begin{proof} That $G$ contains a $\pa_t$-critical subgraph $G'$ with $\pa_t(G')=\pa_t(G)$ follows from Proposition~\ref{Chap3:Prop:critical-method}(a). Now let $G'$ be any such a subgraph. If a vertex $v$ of $G$ does not belong to $G'$, then $G'\subseteq G-v$ and, since $G$ is $\pa_t$-vertex-critical, we obtain that $\pa_t(G')\leq \pa_t(G-v)<\pa_t(G)$, which is impossible.
\end{proof}

\section{Constructions for $\pa_t$-critical graphs}

There are two well known constructions for $\chi$-critical graphs that can easily be extended to $\pa_t$-critical graphs, namely the Dirac join and the Haj\'os join. The first construction is very common in graph theory and was first used by Dirac (see Gallai \cite[(2.1)]{Gallai63a}) to construct $\cn$-critical graphs, and the second construction was invented by Haj\'os \cite{Hajos61} to characterize the class of graphs with chromatic number at least $k$.

In this section let $\ell$ be a given positive integer. Let $G_1$ and $G_2$ be two \DF{disjoint graphs}, that is, $G_1$ and $G_2$ have no vertex and no edge in common. Let $G$ be the graph obtained from the union $G_1 \cup G_2$ by adding edges between $V(G_1)$ and $V(G_2)$ so that $\mu_G(u,v)=\ell$ whenever $u\in V(G_1)$ and $v\in V(G_2)$. We call $G$ the \DF{Dirac $\ell$-join} of $G_1$ and $G_2$ and write $G=G_1 \dil G_2$. The proof of the following theorem is easy and left to the reader.

\begin{theorem}[Dirac Construction]
Let $G=G_1 \dit G_2$ be the Dirac $t$-join of two disjoint non-empty graphs $G_1$ and $G_2$. Then $\pa_t(G)=\pa_t(G_1)+\pa_t(G_2)$ and $G$ is $\pa_t$-critical if and only if both $G_1$ and $G_2$ are $\pa_t$-critical.
\label{Chap4:Theo:Dirac-join}
\end{theorem}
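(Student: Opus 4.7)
The plan is to first establish the additivity $\pa_t(G)=\pa_t(G_1)+\pa_t(G_2)$ and then derive both directions of the criticality equivalence from it. Throughout write $k_i=\pa_t(G_i)$ and $k=k_1+k_2$. For the upper bound $\pa_t(G)\le k$, I would take optimal $\cDG_t$-colorings of $G_1$ and $G_2$ on disjoint color sets $[1,k_1]$ and $[k_1+1,k]$ and concatenate them; because no color class meets both sides, each color class lies entirely in some $G_i$ and inherits strict $t$-degeneracy. For the matching lower bound, let $\f\in\CO_t(G,m)$ and fix an arbitrary color class $X=\fin(c)$. If $X$ intersected both $V(G_1)$ and $V(G_2)$, then in $G[X]$ every vertex on either side would be joined by $t$ parallel edges to each vertex on the (non-empty) other side, so $\de(G[X])\ge t$, contradicting $G[X]\in\cDG_t$. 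Hence the color sets used on $V(G_1)$ and $V(G_2)$ are disjoint, restriction produces valid $\cDG_t$-colorings of the two $G_i$'s, and therefore $m\ge k_1+k_2$.

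For the forward direction of the criticality equivalence, suppose $G$ is $\pa_t$-critical and let $G_1'$ be any proper subgraph of $G_1$; then $G_1'\dit G_2$ is a proper subgraph of $G$, so $\pa_t(G_1'\dit G_2)<k$ and additivity gives $\pa_t(G_1')<k_1$. Hence $G_1$ is $\pa_t$-critical, and $G_2$ follows by symmetry.

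For the converse, assume both $G_i$ are $\pa_t$-critical. Then $k\ge 2$ and $\de(G)\ge t\ge 1$, so by Proposition~\ref{Chap3:Prop:critcal=edgedeleted} I only need to verify $\pa_t(G-e)<k$ for every $e\in E(G)$. If $e\in E(G_i)$, then $G-e=(G_i-e)\dit G_{3-i}$ and additivity together with the criticality of $G_i$ immediately yield $\pa_t(G-e)<k$. The only case I expect to require real work is a cross edge $e$ with endpoints $u\in V(G_1)$ and $v\in V(G_2)$: the key observation is that deleting $e$ drops $\mu_G(u,v)$ from $t$ to $t-1$, just short of the threshold that forced $u$ and $v$ into different color classes in the additivity argument. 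Concretely, the criticality of each $G_i$ gives $\pa_t(G_1-u)\le k_1-1$ and $\pa_t(G_2-v)\le k_2-1$; I would then pick optimal $\cDG_t$-colorings of $G_1-u$ and $G_2-v$ on the disjoint color sets $[2,k_1]$ and $[k_1+1,k-1]$, and assign both $u$ and $v$ the merged color $1$. In $G-e$ the new color class $\{u,v\}$ induces a graph on two vertices with $t-1$ parallel edges, whose maximum degree is $t-1$, so it lies in $\cDG_t$; every other color class sits inside $V(G_1-u)$ or $V(G_2-v)$ and inherits strict $t$-degeneracy from the respective $G_i$. This exhibits a $\cDG_t$-coloring of $G-e$ with $k-1$ colors, completing the proof.
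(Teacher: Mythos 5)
Your proof is correct and complete: the additivity argument via the observation that no color class can meet both sides, and the edge-deletion check split into internal edges and cross edges (merging $u$ and $v$ into a single class of multiplicity $t-1$), is exactly the standard argument the authors have in mind. The paper itself omits the proof of Theorem~\ref{Chap4:Theo:Dirac-join} ("easy and left to the reader"), so there is nothing to contrast it with; your write-up fills that gap faithfully.
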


Let $G_1$ and $G_2$ be two disjoint graphs and, for $i\in \{1,2\}$, let $(u_i,v_i)$ be a pair of distinct vertices of $G_i$ and let $E_i\subseteq E_{G_i}(u_i,v_i)$ be a set of $\ell$ edges.
Let $G$ be the graph obtained from the union $G_1 \cup G_2$ by deleting the  edge sets $E_1$  and $E_2$ from $G_1$ and $G_2$, respectively, identifying the vertices $v_1$ and $v_2$, and adding $\ell$ new edges between $u_1$ and $u_2$. We then say that $G$ is the \DF{Haj\'os $\ell$-join} of $G_1$ and $G_2$ and write $G=(G_1,u_1,v_1,E_1)\hal (G_2,u_2,v_2,E_2)$, or briefly $G=G_1 \hal G_2$. The Haj\'os 1-join is also called the \DF{Haj\'os join} and we write $\has$ rather than $\has^1$. Figure~\ref{Chap4:Fig:K4hasK4} shows the Haj\'os joins $G=2K_4\has 2K_4$ and $G'=2K_4\has^2 2K_4$

\begin{figure}[htbp]
\centering
% Use the relevant command for your figure-insertion program
% to insert the figure file.
% For example, with the option graphics use
\includegraphics[height=1.7cm]{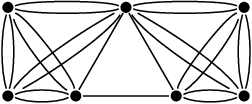}
\hspace{1cm}
\includegraphics[height=1.7cm]{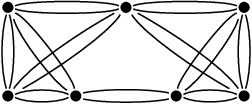}
%
% If not, use
%\picplace{5cm}{2cm} % Give the correct figure height and width in cm
%
\caption{Two Haj\'os joins of two $2K_4$'s.}
\label{Chap4:Fig:K4hasK4}   % Give a unique label
\end{figure}

\begin{proposition}
Let $G=G_1\halt G_2$ be the Haj\'os $t$-join of two disjoint non-empty graphs $G_1$ and $G_2$. Then $\pa_t(G) \geq \min\{\pa_t(G_1), \pa_t(G_2)\}.$
\label{Chap4:Prop:minimalcolor}
\end{proposition}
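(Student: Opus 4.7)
The plan is to proceed by contradiction. Suppose $\varphi \in \CO_t(G,k)$ with $k < \min\{\pa_t(G_1), \pa_t(G_2)\}$. I will produce from $\varphi$ a $\cDG_t$-coloring with at most $k$ colors of either $G_1$ or $G_2$, contradicting the minimality of $\pa_t(G_1)$ or $\pa_t(G_2)$.

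The key preliminary observation is that $\varphi(u_1) \neq \varphi(u_2)$: otherwise the $t$ new parallel edges joining $u_1$ and $u_2$ in $G$ would all lie in a single color class, whose induced subgraph then contains a submultigraph on $\{u_1,u_2\}$ with every vertex of degree $t$, violating strict $t$-degeneracy. Let $v$ denote the vertex of $G$ obtained by identifying $v_1$ and $v_2$. Since $\varphi(u_1) \neq \varphi(u_2)$, at least one of them differs from $\varphi(v)$; by the symmetry between $G_1$ and $G_2$ in the construction, I may assume $\varphi(u_1) \neq \varphi(v)$.

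I then define $\psi_1 \colon V(G_1) \to [1,k]$ by $\psi_1(x) = \varphi(x)$ for $x \in V(G_1) \setminus \{v_1\}$ and $\psi_1(v_1) = \varphi(v)$, and verify that $\psi_1 \in \CO_t(G_1, k)$. The only edges of $G_1$ not present in $G$ are those of $E_1$, all of which join $u_1$ to $v_1$; but the case assumption gives $\psi_1(u_1) = \varphi(u_1) \neq \varphi(v) = \psi_1(v_1)$, so no edge of $E_1$ lies inside any color class of $\psi_1$. Consequently, for each color class $C$ of $\psi_1$, the induced subgraph $G_1[C]$ is isomorphic (via the identification $v_1 \mapsto v$ when $v_1 \in C$) to an induced subgraph of $G$ contained in a single $\varphi$-color class. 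That latter subgraph lies in $\cDG_t$ because $\varphi$ is a $\cDG_t$-coloring, and since $\cDG_t$ is closed under taking subgraphs, so does $G_1[C]$.

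The main obstacle is the careful edge bookkeeping between $G_1$ and $G$ at the vertices $u_1, v_1, v$: the argument works precisely because $E_1$ is the set of edges whose reinsertion could destroy strict $t$-degeneracy inside a color class, and the case assumption makes those edges harmless for $\psi_1$. The opposite case $\varphi(u_2) \neq \varphi(v)$ is handled symmetrically by defining a corresponding coloring $\psi_2$ on $G_2$.
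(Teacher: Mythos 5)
Your argument is correct and follows the same route as the paper's proof: use the $t$ parallel edges between $u_1$ and $u_2$ to force $\varphi(u_1)\neq\varphi(u_2)$, conclude that $\varphi(v^*)$ differs from one of them, and then transfer the restriction of $\varphi$ to the corresponding $G_i$ (the deleted edges $E_i$ being harmless since their ends get distinct colors). You merely spell out the verification that the restricted coloring is a $\cDG_t$-coloring in more detail than the paper does.
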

\begin{proof}
Suppose that $G=(G_1,u_1,v_1,E_1)\halt (G_2,u_2,v_2,E_2)$ and denote by $v^*$ the vertex of $G$ obtained by identifying $v_1$ and $v_2$. Assume that $\pa_t(G) = k$. Then there is a coloring $\f \in \CO_t(G,k)$. Because $\mu_G(u_1,u_2)=t$ we obtain that $\f(u_1) \neq \f(u_2)$ and, therefore, $\f(v^*) \neq \f(u_i)$ for an $i \in \{1,2\}$, say $i=1$. Then the restriction of $\varphi$ to $G_1$ induces a coloring $\f_1\in \CO_t(G_1,k)$ and so $\pa_t(G_1)\leq k=\pa_t(G)$.
\end{proof}

A graph $G$ is called \DF{Haj\'os-$k$-constructible} if $G$ is a simple graph that can be obtained from disjoint copies of $K_k$ by repeated application of the Haj\'os join and the identification of two non-adjacent vertices. Haj\'os \cite{Hajos61} proved that any graph with chromatic number at least $k$ contains a Haj\'os-$k$-constructible subgraph. Urquhart \cite{Urquhart97} extended Haj\'os' result and proved that, for $k\geq 3$, a simple graph $G$ has chromatic number at least $k$ if and only if $G$ itself is Haj\'os-$k$-constructible. Dirac and, independently, Gallai proved that if $G_1$ and $G_2$ are two disjoint simple graphs and $k\geq 4$, then $G_1 \has G_2$ belongs to $\CR_1(k)$ if and only if both $G_1$ and $G_2$ belong to $\CR_1(k)$ (for a proof see also the paper by Schweser, Stiebitz and Toft \cite[Theorem 9]{SchweserST2019}). Note that $C_7=C_4\has C_4$ belongs to $\CR_1(3)$, but $C_4$ does not belong to $\CR_1(3)$. The next theorem shows that the result of Dirac and Gallai has a counterpart for the point aboricity $\pa_2$.

\begin{theorem}[Haj\'os Construction I] \label{theorem_Hajos-join-I}
Let $G=G_1\has G_2$ be the Haj\'os join of two disjoint non-empty graphs $G_1$ and $G_2$. Then the following statements hold:
\begin{itemize}
	\item[\upshape (a)] $\pa_2(G) \geq \min\{\pa_2(G_1), \pa_2(G_2)\}$.
	\item[\upshape (b)] If $\pa_2(G_1)=\pa_t(G_2)=k$ and $k \geq 2$, then $\pa_t(G)=k$.
	\item[\upshape (c)] If both $G_1$ and $G_2$ belong to $\CR_t(k)$ and $k \geq 2$, then $G$ belongs to $\CR_t(k)$.
	\item[\upshape (d)] If $G$ belongs to $\CR_t(k)$ and $k \geq 2$, then both $G_1$ and $G_2$
	belong to $\CR_t(k)$.
\end{itemize}
\label{Chap4:Theo:Hajos-join}
\end{theorem}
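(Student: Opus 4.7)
The proof handles the four parts in sequence and rests on a single structural observation about the Haj\'os $1$-join: the only edges of $G$ joining $V(G_1) \setminus \{v_1\}$ to $V(G_2) \setminus \{v_2\}$ are the edges between $u_1$ and $u_2$. Hence, for any $\pa_2$-coloring $\f$ of $G$ and any color $c$, the forest $G[\f^{-1}(c)]$ decomposes along the identified vertex $v^*$ into a $G_1$-side piece and a $G_2$-side piece, connected only when $\f(u_1)=\f(u_2)=c$.

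For (a), take $\f \in \CO_2(G,k)$ with $k=\pa_2(G)$. If $\f(v^*) \neq \f(u_i)$ for some $i$, extend $\f$ to $G_i$ via $\f_i(v_i):=\f(v^*)$; the structural observation immediately yields $\f_i \in \CO_2(G_i,k)$. Otherwise $\f(u_1)=\f(u_2)=\f(v^*)=c$, and removing the edge $u_1u_2$ from the forest $G[\f^{-1}(c)]$ separates $u_1$ from $u_2$, so $v^*$ is disconnected from at least one of them, say $u_i$; the extension $\f_i(v_i):=c$ is then a $\pa_2$-coloring of $G_i$, because a monochromatic cycle could only arise if $u_i$ and $v^*$ were already connected on the $G_i$-side, which they are not. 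For (b), the lower bound is immediate from (a). For the upper bound, take $\f_i \in \CO_2(G_i,k)$, permute the colors of $\f_2$ so that $\f_2(v_2)=\f_1(v_1)$, and amalgamate into $\f$ on $G$ by identifying at $v^*$. A new monochromatic cycle could come only from the added edge $u_1u_2$, which would require $\f_1(u_1)=\f_2(u_2)=\f_1(v_1)=c$; but then $u_1$ and $v_1$ share color $c$ in the forest $G_1[\f_1^{-1}(c)]$, and since a forest admits no parallel edges, $e_1$ is the unique $u_1$-$v_1$ path there. Deleting $e_1$ (as the Haj\'os join does) places $u_1$ and $v_1$ in distinct components of the $G_1$-side piece, ruling out the cycle. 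This ``no new monochromatic cycle'' verification is the main technical obstacle of the whole theorem; the remaining parts reduce to it together with the block formula.

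For (c) I need $\pa_2(G-e) \leq k-1$ for every $e \in E(G)$. If $e \in E(G_i) \setminus \{e_i\}$, then $G-e$ is again a Haj\'os $1$-join, and the amalgamation of (b) with one fewer color --- using $(k-1)$-colorings of $G_i-e$ and $G_{3-i}-e_{3-i}$ supplied by the criticality of $G_i$ and $G_{3-i}$ --- yields the required coloring. If $e=u_1u_2$, then $v^*$ becomes a cut vertex of $G-e$, and the block formula~(\ref{Chap2:Equ:Block}), valid for $t \leq 2$, gives $\pa_2(G-e)=\max\{\pa_2(G_1-e_1),\pa_2(G_2-e_2)\} \leq k-1$. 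For (d), assume $G \in \CR_2(k)$. The block formula applied to $G-u_1u_2$ yields $\pa_2(G_i-e_i) \leq k-1$ and hence $\pa_2(G_i) \leq k$; if $\pa_2(G_i)<k$, a (b)-style amalgamation of a $(k-1)$-coloring of $G_i$ with a $(k-1)$-coloring of $G_{3-i}-e_{3-i}$ would produce a $(k-1)$-coloring of $G$, contradicting $\pa_2(G)=k$, so $\pa_2(G_1)=\pa_2(G_2)=k$. For criticality of $G_i$, the case $e'=e_i$ is the block formula above; for $e' \in E(G_i) \setminus \{e_i\}$, criticality of $G$ gives $\pa_2(G-e') \leq k-1$, while (a) applied to $G-e'=(G_i-e') \has G_{3-i}$ gives $\pa_2(G-e') \geq \min\{\pa_2(G_i-e'),k\}$, forcing $\pa_2(G_i-e') \leq k-1$ as required.
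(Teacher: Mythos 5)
Your proof is correct, and its backbone is the same as the paper's: a monochromatic cycle in the join must use the new edge $e^*$ and hence pass through $v^*$, so it splits into a $u_1$--$v^*$ path on the $G_1$-side and a $v^*$--$u_2$ path on the $G_2$-side, either of which recombines with $e_i$ into a monochromatic cycle of $G_i$; conversely, criticality supplies $(k-1)$-colorings of the edge-deleted pieces that are amalgamated at $v^*$. The genuine differences are local. Where you invoke the block formula \eqref{Chap2:Equ:Block}, the paper handles $e=e^*$ in (c) by an explicit amalgamation of colorings $\f_i\in \CO_2(G_i-e_i,k-1)$ (which satisfy $\f_i(u_i)=\f_i(v_i)$ by Proposition~\ref{Chap3:Prop:coloring-property}(b)), and it obtains $\pa_2(G_i-e_i)\leq k-1$ in (d) even more cheaply, simply because $G_i-e_i$ is isomorphic to a proper subgraph of $G$; your route through \eqref{Chap2:Equ:Block} is equally valid and arguably tidier. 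For the last step of (d), where $e'\in E(G_i)\sm\{e_i\}$, the paper argues directly with colorings: a $(k-1)$-coloring of $G-e'$ restricted to $G_{3-i}-e_{3-i}$ must contain a monochromatic $u_{3-i}$--$v^*$ path because $\pa_2(G_{3-i})\geq k$, which excludes a monochromatic $u_i$--$v^*$ path on the other side. You instead reapply statement (a) to $G-e'=(G_i-e')\has G_{3-i}$ and read off $\pa_2(G_i-e')\leq k-1$ from $\pa_2(G-e')\leq k-1<\pa_2(G_{3-i})$; this is shorter and a nice reuse of (a). The only omission is the trivial hypothesis $\de(G_i)\geq 1$ needed to apply Proposition~\ref{Chap3:Prop:critcal=edgedeleted} in (d); it follows at once since $\de(G)\geq 2(k-1)\geq 2$ and $v_i$ is incident with $e_i$, but it should be said.
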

\begin{proof}
Let $G=(G_1,u_1,v_1,\{e_1\})\has (G_2,u_2,v_2,\{e_2\})$. Denote by $v^*$ the vertex of $G$ obtained from the identification of $v_1$ and $v_2$, and let $e^*\in E_G(u_1,u_2)$ be the new edge.

In order to proof (a) let $\pa_2(G) = k$. Then there is a coloring $\f \in \CO_2(G,k)$. For $i\in \{1,2\}$, $\f$ induces a coloring $\f_i\in \CO_2(G_i-e_i,k)$ with $\f_i(v_i)=\f(v^*)$. We claim that either $\f_1 \in \CO_2(G_1,k)$ or $\f_2 \in \CO_2(G_2,k)$. Otherwise, for $i\in \{1,2\}$, there is a monochromatic cycle $C_i$ of $G_i$ with respect to $\f_i$ that contains the edge $e_i$.
But then the Haj\'os join $C=(C_1,u_1,v_1,\{e_1\})\has (C_2,u_2,v_2,\{e_2\})$ is a monochromatic cycle of $G$ with respect to $\f$, which is impossible.

For the proof of (b) let $\pa_2(G_1) = \pa_2(G_2)=k$ with $k \geq 2$. Then, for $i\in \{1,2\}$, there is a coloring $\f_i \in \CO_2(G_i,k)$. By permuting colors if necessary, we may choose these two colorings so that $\f_1(v_1)=\f_2(v_2)$. Let $\f:V(G)\to [1,k]$ be the mapping induced by $\f_1\cup \f_2$, that is, $\f(v)=\f_i(v)$ if $v\in V(G_i-v_i)$ and $\f(v^*)=\f_1(v_1)=\f_2(v_2)$. We claim that $\f \in \CO_2(G,k)$. For otherwise, there is a monochromatic cycle $C$ with respect to $\f$, and this cycle contains the vertex $v^*$ and the edge $e^*$. Using the edge $e_i$, we can then construct a monochromatic cycle $C_i$ with respect to $\f_i$, a contradiction. Hence $\f \in \CO_2(G,k)$ as claimed. Then $\pa_2(G)\leq k$ and (a) implies that $\pa_t(G)=k$.

In order to prove (c) suppose that both $G_1$ and $G_2$ belong to $\CR_2(k)$ and $k \geq 2$. Then $\de(G)\geq 1$ and $\pa_2(G)=k$ (by (b)). Hence in order to show that $G\in \CR_2(k)$ it suffices to show that $\pa_2(G-e)\leq k-1$ for all edges $e\in E(G)$ (by Proposition~\ref{Chap3:Prop:critcal=edgedeleted}).

\case{1}{$e=e^*$.} Since $G_i$ belongs to $\CR_2(k)$, there exists a coloring $\f_i\in CO_2(G_i-e_i,k-1)$ and $\f_i(u_i)=\f_i(v_i)$ (by Proposition~\ref{Chap3:Prop:coloring-property}(b)). By permuting colors if necessary, we may assume that $\f_1(v_1)=\f_2(v_2)$. Then $\f_1\cup \f_2$ induces a coloring $\f\in CO_2(G-e^*,k-1)$ and so $\pa_2(G-e)\leq k-1$.

\case{2}{$e \not= e^*$.} Then $e$ belongs to $G_i-e_i$ for some $i\in \{1,2\}$, and by symmetry, we may assume that $i=1$. Since $G_1\in \CR_2(k)$, there is a coloring $\f_1 \in \CO_2(G_1 - e,k-1)$. Since $G_2 \in \CR_2(k)$, there exists also a coloring $\f_2 \in \CO_2(G_2 - e_2,k-1)$ and $\f_2(u_2)=\f_2(v_2)$ (by Proposition~\ref{Chap3:Prop:coloring-property}(b)). By permuting colors if necessary, we may choose $\f_2$ so that $\f_2(v_2)=\f_1(v_1)$. Then $\f_1 \cup \f_2$ induces a coloring $\f\in \CO_2(G-e,k-1)$ and, hence, $\pa_2(G-e)\leq k-1$. For otherwise, there is a monochromatic cycle $C$ with respect to $\f$ and $C$ contains $v^*$ and $e^*$, but neither $e_1$ nor $e_2$. Since $e\not=e_1$ we obtain from $C$ and $e_1$ a monochromatic cycle with respect to $\f_1$, which is impossible. This completes the proof of (c).

For the proof of (d) assume that $G$ belongs to $\CR_2(k)$ and $k \geq 2$. Our aim is to show that both $G_1$ and $G_2$ belong to $\CR_2(k)$. To this end, we apply Proposition~\ref{Chap3:Prop:critcal=edgedeleted}. Since $G\in \CR_2(k)$, $G$ is connected and has no separating vertex (by Proposition~\ref{Chap3:Prop:coloring-property}(e)). Hence $G_i$ is connected and so $\de(G_i)\geq 1$ for $i\in \{1,2\}$. Next we claim that $\pa_2(G_1)\geq k$. For otherwise there is a coloring $\f_1\in \CO_2(G_1,k-1)$. As $G\in \CR_2(k)$, $\pa_2(G-e^*)\leq k-1$, which leads to a coloring $\f_2\in \CO_2(G_2-e_2,k-1)$. By permuting colors if necessary, we may assume that $\f_1(v_1)=\f_2(v_2)$. Let $\f:V(G)\to [1,k-1]$ be the mapping induced by $\f_1 \cup \f_2$ (i.e.,
$\f(v)=\f_i(v)$ if $v\in V(G_i-v_i)$ and $\f(v^*)=\f_1(v_1)=\f_2(v_2)$). If there exists a monochromatic cycle $C$ with respect to $\f$, then both $v^*$ and $e^*$ belong to $C$ and, by using $e_1$, $C$ yields a monochromatic cycle with respect to $\f_1$, which is impossible. Hence $\f\in \CO_2(G,k-1)$, which is impossible. This proves the claim that $\pa_2(G_1)\geq k$. By a similar argument it follows that $\pa_2(G_2)\geq k$.

It remains to show that $\pa_2(G_i-e) < k$ for every $e \in E(G_i)$ and $i\in \{1,2\}$. By symmetry, it suffices to show this for $i=1$. If $e=e_i$, then $G_1-e_1$ is a proper subgraph of $G$ and, since $G\in \CR_2(k)$, we obtain that $\pa_2(G_1-e_1)\leq k-1$. Now assume that $e\not= e_i$. Then there is a coloring $\f\in \CO_2(G-e,k-1)$. Then $\f$ induces a coloring $\f_2\in \CO_2(G_2-e_2,k-1)$. As $\pa_2(G_2)\geq k$, this implies that $\f(v^*)=\f(u_2)$ and there is a monochromatic path $P$ in $G_2-e_2$ with respect to $\f$, whose ends are $v^*$ and $u_2$. Hence there is no monochromatic path in $G_1-e_1$ with respect to $\f$ whose ends are $v^*$ and $u_1$. Consequently, $\f$ induces a coloring $\f_1\in \CO_2(G_1-e)$ and so $\pa_2(G_1-e)\leq k-1$. This completes the proof of (d).
\end{proof}

\begin{theorem}[Haj\'os Construction II] \label{theorem_Hajos-join-II}
Let $G=G_1\has^2 G_2$ be the Haj\'os $2$-join of two disjoint non-empty graphs $G_1$ and $G_2$. Then the following statements hold:
\begin{itemize}
\item[\upshape (a)] If $\pa_2(G_1)=\pa_2(G_2)=k$ and $k \geq 3$, then $\pa_2(G)=k$.
\item[\upshape (b)] If both $G_1$ and $G_2$ belong to $\CR_2(k)$ and $k \geq 3$, then $G$ belongs to $\CR_2(k)$.
\end{itemize}
\end{theorem}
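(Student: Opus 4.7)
The proof strategy parallels that of Theorem~\ref{Chap4:Theo:Hajos-join} with modifications to handle the two parallel edges. Let $G = (G_1, u_1, v_1, E_1) \has^2 (G_2, u_2, v_2, E_2)$, let $v^*$ denote the vertex identified from $v_1$ and $v_2$, and let $e_1^*, e_2^* \in E_G(u_1, u_2)$ be the two new parallel edges. The key elementary observation used throughout is that in any $\cDG_2$-coloring $\f$ of a graph containing two parallel edges between vertices $x$ and $y$ one has $\f(x) \neq \f(y)$, since otherwise those two edges form a monochromatic $2$-cycle.

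For part (a), the lower bound $\pa_2(G) \geq k$ is immediate from Proposition~\ref{Chap4:Prop:minimalcolor} applied with $t=2$. For the upper bound $\pa_2(G) \leq k$, take $\f_i \in \CO_2(G_i, k)$ and permute colors so that $\f_1(v_1) = \f_2(v_2)$. The parallel edges in $E_i$ force $\f_i(u_i) \neq \f_i(v_i)$, so $\f_1(u_1)$ and $\f_2(u_2)$ lie in a common palette of $k-1$ colors; here the assumption $k \geq 3$ enters, since a further permutation of two colors distinct from $\f_1(v_1)$ inside $\f_2$ can force $\f_1(u_1) \neq \f_2(u_2)$. Define $\f = \f_1 \cup \f_2$ with $\f(v^*) = \f_1(v_1)$. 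Every potential monochromatic cycle in $G$ either uses one of the new edges, contradicting $\f(u_1) \neq \f(u_2)$, or avoids them both; in the latter case, since $v^*$ is a cut vertex of $G - \{e_1^*, e_2^*\}$, the cycle lies entirely inside some $G_i - E_i \subseteq G_i$, contradicting the validity of $\f_i$. Hence $\f \in \CO_2(G, k)$.

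For part (b), after using part (a) to obtain $\pa_2(G) = k$, I apply Proposition~\ref{Chap3:Prop:critcal=edgedeleted} and verify $\pa_2(G - e) \leq k - 1$ for every edge $e \in E(G)$. When $e$ lies inside $G_1 - E_1$ (the symmetric case $e \in E(G_2 - E_2)$ is analogous), take $\f_1 \in \CO_2(G_1 - e, k-1)$, which still satisfies $\f_1(u_1) \neq \f_1(v_1)$ because $E_1 \subseteq G_1 - e$, and take $\f_2 \in \CO_2(G_2 - e_{2,1}, k-1)$ with $\f_2(u_2) = \f_2(v_2)$ guaranteed by Proposition~\ref{Chap3:Prop:coloring-property}(b); aligning colors so that $\f_1(v_1) = \f_2(v_2)$ automatically yields $\f_1(u_1) \neq \f_2(u_2)$, after which the cut-vertex analysis from part (a) shows that $\f_1 \cup \f_2$ is a valid $(k-1)$-coloring of $G - e$. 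The subtle case is $e = e_1^*$: here the natural choice $\f_i \in \CO_2(G_i - e_{i,1}, k-1)$ with $\f_i(u_i) = \f_i(v_i) = c$ (again by Proposition~\ref{Chap3:Prop:coloring-property}(b)) collapses all of $u_1, u_2, v^*$ to the single color $c$, so the surviving new edge $e_2^*$ becomes monochromatic. I expect this to be the main obstacle, and it is resolved by a structural observation on color forests: in the color-$c$ forest of $G_i - e_{i,1}$ the remaining parallel edge $e_{i,2}$ must be a monochromatic bridge (every edge of a forest is a bridge), so in $G_i - E_i$ the vertices $u_i$ and $v_i$ lie in distinct color-$c$ components. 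After gluing at $v^*$, the $u_1$-tree and $u_2$-tree remain disjoint from each other and from the $v^*$-tree, and the added edge $e_2^*$ merely merges the two $u_i$-trees into a single larger tree; the color classes $c' \neq c$ give forests trivially since $v^*$ belongs to none of them and $e_2^*$ is not $c'$-monochromatic. Hence $\f_1 \cup \f_2 \in \CO_2(G - e_1^*, k-1)$, completing the proof that $G \in \CR_2(k)$.
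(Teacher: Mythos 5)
Your proposal is correct and follows essentially the same route as the paper's proof: part (a) is identical (permute colors using $k\geq 3$ so that $\f_1(v_1)=\f_2(v_2)$ and $\f_1(u_1)\neq\f_2(u_2)$), and part (b) uses the same case split together with the same key observation that, since $|E_i|=2$, the surviving parallel edge forces $G_i-E_i$ to contain no monochromatic $u_i$--$v_i$ path, so gluing at $v^*$ and adding the remaining new edge creates no monochromatic cycle. Your phrasing of this observation via bridges of the colour-$c$ forest is just a restatement of the paper's argument, so no further comparison is needed.
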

\begin{proof}
Let $G=(G_1,u_1,v_1,E_1)\has^2 (G_2,u_2,v_2,E_2)$. Denote by $v^*$ the vertex of $G$ obtained by identifying $v_1$ and $v_2$, and let $E^*=E_G(u_1,u_2)$ be the set of new edges. Note that $|E_1|=|E_2|=|E^*|=2$.

For the proof of (a), assume that $\pa_2(G_1)=\pa_2(G_2)=k$ and $k\geq 3$. To prove that $\pa_2(G)=k$ it suffices to show that $\pa_2(G)\leq k$ (by Propostion~\ref{Chap4:Prop:minimalcolor}). By assumption there is a coloring $\f_i\in \CO_2(G,k)$ for $i\in \{1,2\}$. As $\mu_{G_i}(u_i,v_i)\geq |E_i|=2$, we obtain that $\f_i(u_i)\not=\f_i(v_i)$. As $k\geq 3$ we can permute colors if necessary such that $\f_1(v_1)=\f_2(v_2)$ and $\f_1(u_1)\not=\f_2(u_2)$. Then, $\f_1\cup \f_2$ induces a coloring $\f\in \CO_2(G,k)$ and so $\pa_2(G)\leq k$ as required.

For the proof of (b) assume that both $G_1$ and $G_2$ belong to $\CR_2(k)$ and $k\geq 3$. Then $\de(G)\geq 1$ and $\pa_2(G)=k$ (by (a)). Hence to prove that $G\in \CR_2(k)$ it suffices to show that $\pa_2(G-e)\leq k-1$ for all edges $e\in E(G)$ (by Proposition~\ref{Chap3:Prop:critcal=edgedeleted}). By Proposition~\ref{Chap3:Prop:coloring-property}(c) it follows that $\mu_{G_i}(u_i,v_i)=2$ for $i\in \{1,2\}$.

\case{1}{$e\in E^*$.} Let $i\in \{1,2\}$ and let $e_i\in E_i$ be an edge. Since $G_i$ belongs to $\CR_2(k)$, there is a coloring $\f_i\in CO_2(G_i-e_i,k-1)$ and $\f_i(u_i)=\f_1(v_i)$ (by Proposition~\ref{Chap3:Prop:coloring-property}(b)). Since $|E_i|=2$, $G_i-E_i$ contains no monochromatic path with respect to $\f_i$ between $u_i$ and $v_1$. By permuting colors we may assume that $\f_1(v_1)=\f_2(v_2)$. Then $\f_1 \cup \f_2$ induces a coloring $\f\in \CO_2(G-e,k-1)$ and so $\pa_2(G-e)\leq k-1$.

\case{2}{$e \not\in E^*$.} By symmetry we may assume that $e \in E(G_1)\sm E_1$. Since $G_1$ belongs to $\CR_2(k)$, there is a coloring $\f_1 \in \CO_2(G_1 - e,k-1)$. Since $e\not\in E_{G_1}(u_1,v_1)$ and $\mu_{G_1}(u_1,v_1)=2$, we obtain that $\f_i(u_1)\not=\f_1(v_1)$. Let $e_2\in E_2$ be an edge. Since $G_2$ belongs to $\CR_2(k)$, there is a coloring $\f_2\in \CO_2(G_2-e_2,k-1)$ and $\f_2(v_2)=\f_2(u_2)$. By permuting colors if necessary, we may assume that $\f_1(v_1)=\f_2(v_2)$.
Then $\f_1\cup \f_2$ induces a coloring $\f\in \CO_2(G-e,k-1)$ and so $\pa_2(G-e)\leq k-1$.
This completes the proof of (b).
\end{proof}

Using the Haj\'os join and the Dirac join, it is well known and easy to show that if $k\geq 4$, then
$$\CR_1(k,k)=\{K_k\} \mbox{ and } \CR_1(k,n)\not=\ems  \mbox{ if and only if } n\geq k \mbox{ and } n\not=k+1.$$
If $G\in \CR_1(k)$, then $tG\in \CR_t(k)$ (by Proposition~\ref{Chap3:Prop:chi(G)=pat(tG)}). Consequently, we have
$$\CR_t(k,k)=\{tK_k\} \mbox{ and } \CR_t(k,n)\not=\ems  \mbox{ if } n\geq k \mbox{ and } n\not=k+1.$$
Clearly, $\CR_t(k,n)=\ems$ if $n<k$ and it is easy to show that $\CR_t(k,k+1)\not=\ems$ if and only if $t\geq 2$ (see also Sect.~\ref{Sect:Critical_near_to_order}). If $t=2s\geq 2$, then $sK_3\in \CR_t(2)$ and hence $tK_{k-2}\dit sK_3\in \CR_t(k,k+1)$ (by Theorem~\ref{Chap4:Theo:Dirac-join}).

\section{Indecomposable $\pa_t$-critical graphs}

In what follows, let $\cMG_t$ denote the class of graphs $G$ satisfying $\mu(G)\leq t$. So $\cMG_1$ is the class of simple graphs. Following Gallai, a graph of $\cMG_t$ is called \DF{$t$-decomposable} if it is the Dirac $t$-join of two non-empty disjoint subgraphs; otherwise the graph is called \DF{$t$-indecomposable}. By Theorem~\ref{Chap4:Theo:Dirac-join} it follows that a $t$-decomposable $\pa_t$-critical graph is the Dirac $t$-join of its $t$-indecomposable $\pa_t$-critical subgraphs. So the $t$-indecomposable $\pa_t$-critical graphs are  building elements of $\pa_t$-critical graphs. In 1963, Gallai \cite{Gallai63b} proved the following remarkable result about indecomposable $\chi$-critical graphs.

\begin{theorem} [Gallai 1963]
If $G$ is a $1$-indecomposable $\chi$-vertex-critical graph, then $|G|\geq 2\chi(G)-1$.
\label{Chap5:Theo:Gallai-vertex-critical}
\end{theorem}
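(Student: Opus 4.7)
The plan is to argue by contradiction. Suppose $G$ is $1$-indecomposable (so $G$ is simple and $\overline{G}$ is connected) and $\chi$-vertex-critical with $\chi(G) = k$ and $n := |G| \leq 2k - 2$, and derive a contradiction. The cases $k \leq 2$ are trivial: $K_1$ is the only $\chi$-vertex-critical graph with $k = 1$ and satisfies $n = 1 = 2k-1$, while the only $\chi$-vertex-critical graph with $k = 2$ is $K_2$, which is $1$-decomposable. Since $K_k$ is $1$-decomposable for every $k \geq 2$, we may further assume $k \geq 3$ and that $G$ is not complete.

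The first technical step is a pigeonhole on singleton color classes. For any vertex $v$, vertex-criticality supplies a $(k-1)$-coloring $\varphi$ of $G - v$, and since $\chi(G) = k$, every color class of $\varphi$ must contain a neighbor of $v$ (otherwise $v$ could be assigned the missing color). The $n - 1 \leq 2k - 3$ vertices of $G - v$ are partitioned into $k - 1$ nonempty classes, which forces at least $2(k-1) - (n-1) = 2k - n - 1 \geq 1$ classes to be singletons; every singleton is a neighbor of $v$.

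The second and main step is to use the connectedness of $\overline{G}$ to stitch together singletons from different colorings into a proper $(k-1)$-coloring of all of $G$, contradicting $\chi(G) = k$. Given two $G$-non-adjacent vertices $u_1, u_2$ (which exist since $G$ is not complete), fix a $\overline{G}$-path $u_1 = x_0, x_1, \ldots, x_\ell = u_2$ and a $(k-1)$-coloring $\varphi$ of $G - x_0$. A singleton $w$ of $\varphi$ can be moved to any color whose entire class is non-adjacent to $w$ in $G$; by performing a cascade of such singleton-moves along the $\overline{G}$-path (at each step reselecting an appropriate $(k-1)$-coloring of a suitable vertex-deleted subgraph and a new singleton supplied by the pigeonhole), one can eventually produce a proper $(k-1)$-coloring of $G$ itself.

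The main obstacle is to justify the cascade rigorously. The singleton pigeonhole only guarantees at least one ``free'' vertex per coloring, and at each step one must verify that (i) an appropriate singleton exists with respect to the current coloring, (ii) a target color class is available (i.e., disjoint from the singleton's $G$-neighborhood), and (iii) the resulting partial coloring is still a proper $(k-1)$-coloring of the relevant subgraph. This is where the full strength of the hypotheses is used: vertex-criticality ensures the local $(k-1)$-colorings exist, while the connectedness of $\overline{G}$ supplies the path of non-edges along which to propagate the recolorings. Making these invariants precise and maintaining them through every cascade step is the technical crux of Gallai's argument.
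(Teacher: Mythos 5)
Your opening reductions and the pigeonhole step are sound: with $n\leq 2k-2$, every $(k-1)$-coloring of $G-v$ has at least $2k-n-1\geq 1$ singleton color classes, each of which is a neighbour of $v$. But the argument stops exactly where the real work begins. The ``cascade of singleton-moves along a $\overline{G}$-path'' is asserted rather than performed, and you concede as much when you call making it precise ``the technical crux.'' The one elementary move you do describe --- reassigning a singleton $w$ to a colour class disjoint from $N_G(w)$ --- cannot by itself drive the process: after such a move there is no guarantee that the next vertex of the $\overline{G}$-path is a singleton of the new colouring, no guarantee that a target class exists, and nothing that forces termination in a proper $(k-1)$-colouring of all of $G$. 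So as written there is a genuine gap at the central step.

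What actually makes the propagation work (this is the Stehl\'ik-type argument that the present paper adapts in Section 6 to prove the $\pa_t$ analogue, Theorem~\ref{Chap5:Theo:Gallai-pt}) needs two ingredients that are absent from your sketch. First, an extremal choice of colouring: among optimal colourings in which $v$ is a singleton class, take one minimizing the number of singleton classes (a ``$v$-extreme'' colouring). Second, rather than moving one vertex at a time, one compares two colourings $\f_1,\f_2$ simultaneously via the hypergraph whose edges are all colour classes of size at least $2$ of either colouring, and swaps $\f_1$ with $\f_2$ on the vertex set of a whole component. The key lemma (Claim~\ref{Chap3:Claim1} in the paper) is that a component can contain at most one singleton of a $v$-extreme colouring; its proof is an alternating-path surgery that would otherwise reduce the number of singletons and contradict extremality. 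That lemma is precisely what lets one traverse a non-edge $v_1v_2$ of $G$ and transfer the role of distinguished singleton from $v_1$ to $v_2$ while keeping every other singleton fixed (Claim~\ref{Chap3:Claim2}), and iterating along a path of the connected complement then yields the contradiction. Without the extremal choice and the component-swapping lemma, your invariants (i)--(iii) cannot be maintained, so the proposal is a correct outline of the intended strategy but not a proof.
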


Note that the statement of Theorem~\ref{Chap5:Theo:Gallai-vertex-critical} immediately implies the statement of Theorem~\ref{theorem:gallai1}; that the converse implication also holds, follows from Proposition~\ref{Chap3:Prop:critcal=vertex-critical}.

Let $G$ be a graph belonging to $\cMG_t$. To decide whether the graph $G$ is $t$-decomposable we can use its $t$-complement. We call a graph $H$ the \DF{$t$-complement} of $G$, written $H=\overline{G}^t$, if $V(H)=V(G)$, $E(G)\cap E(H)=\ems$, and $\mu_H(u,v)+\mu_G(u,v)=t$ for every pair $(u,v)$ of distinct vertices of $G$. Note that the $1$-complement corresponds to the ordinary complement of a simple graph. Clearly, $H=\overline{G}^t$ if and only if $G=\overline{H}^t$. If $G$ has order $n$, then $G\cup \overline{G}^t=tK_n$. Furthermore,
\begin{equation}
\label{Chap5:Equ:decomposable=connected}
\mbox{$G$ is $t$-decomposable if and only if $\overline{G}^t$ is disconnected.}
\end{equation}

For a simple graph $G$, the chromatic number of the complement of $G$ is called the \DF{cover number} of $G$, written $\ocn(G)$. Hence, $\ocn(G)$ is the least integer $k$ for which $G$ has a coloring with $k$-colors such that each color class induces a complete graph. A simple graph $G$ is \DF{$\ocn$-vertex-critical} if $\ocn(G-v)<\ocn(G)$ for every vertex $v\in V(G)$. As $\ocn(G)=\cn(\overline{G})$,
$G$ is $\ocn$-vertex-critical if and only if $\overline{G}$ is $\chi$-vertex-critical.
So Theorem~\ref{Chap5:Theo:Gallai-vertex-critical} is equivalent to the following result.

\begin{theorem} [Gallai 1963]
If $G$ is a connected $\ocn$-vertex-critical graph, then $|G|\geq 2\ocn(G)-1$.
\label{Chap5:Theo:Gallai-cover-critical}
\end{theorem}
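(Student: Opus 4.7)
I would prove Theorem~\ref{Chap5:Theo:Gallai-cover-critical} through its complement-equivalent form Theorem~\ref{Chap5:Theo:Gallai-vertex-critical}: a graph $G$ is connected and $\ocn$-vertex-critical if and only if $\overline{G}$ is $\chi$-vertex-critical and $1$-indecomposable. So the plan is to argue by contradiction in the latter setting: assume $G$ is $\chi$-vertex-critical and $1$-indecomposable with $\chi(G)=k$ and $n:=|G|\le 2k-2$.

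First, I would fix a $k$-coloring $\varphi$ of $G$ with color classes $V_1,\ldots,V_k$, chosen to maximize the number of singleton classes. Since all $|V_i|\ge 1$ and $\sum_i|V_i|=n\le 2k-2$, at least two classes are singletons. Write $S$ for the set of vertices in singleton classes, so $|S|\ge 2$. Two structural facts follow at once from the optimality of $\chi(G)$: the set $S$ induces a clique in $G$ (otherwise two singleton classes could be merged), and every $u\in S$ has at least one neighbor in every other color class (otherwise $u$ could be absorbed).

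The next step is to analyze Kempe swaps. For $u\in S$ and a non-singleton $V_j$, the component of $u$ in $G[\{u\}\cup V_j]$ is $\{u\}\cup (V_j\cap N_G(u))$; swapping its colors yields a new $k$-coloring whose classes at colors $\varphi(u)$ and $j$ have sizes $|V_j\cap N_G(u)|$ and $1+|V_j|-|V_j\cap N_G(u)|$. The number of singleton classes strictly decreases unless $|V_j\cap N_G(u)|\in\{1,|V_j|\}$, so by maximality of $|S|$ exactly one of these two extremes must occur. When $V_j\cap N_G(u)=\{w\}$, the post-swap coloring has $\{w\}$ as a singleton in place of $\{u\}$; applying the clique property of $S$ to the new coloring forces $w$ to be adjacent to every vertex of $S\setminus\{u\}$, and together with $wu\in E(G)$ this shows $w$ is adjacent to all of $S$. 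Repeating for every $u'\in S$ with a unique neighbor in $V_j$ forces all such unique neighbors to coincide with $w$, so $\{w\}\cup S$ is a clique of size $|S|+1$ in $G$. Picking any $v\in V_j\setminus\{w\}$ (which exists since $|V_j|\ge 2$), this clique lies entirely in $G-v$, so vertex-criticality gives $|S|+1\le\chi(G-v)=k-1$, i.e.\ $|S|\le k-2$. Combined with $|S|\ge 2k-n$ this forces $n\ge k+2$, which in particular handles the case $k=3$ immediately (it leaves no admissible $n$).

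The remaining obstacle, and the main difficulty, is to dispose of the range $k+2\le n\le 2k-2$ for $k\ge 4$. The plan is to iterate the dichotomy over all non-singleton classes. For each non-singleton $V_j$ the analysis produces either a distinguished vertex $w_j\in V_j$ adjacent to all of $S$ (when the "one-neighbor" case applies to some $u\in S$) or the stronger conclusion that every vertex of $V_j$ is adjacent to every vertex of $S$. In either case every non-singleton class contributes at least one vertex adjacent to all of $S$, giving a set $W\subseteq V(G)\setminus S$ with $|W|\ge k-|S|$. The goal is then to show either (i) that $W$ contains a clique of size $k-|S|$, which combined with $S$ yields a $k$-clique lying in $G-v$ for some $v$ outside the clique (giving $\chi(G-v)\ge k$ and contradicting vertex-criticality), or (ii) that some $u\in S$ is adjacent to every other vertex of $G$, realizing $G=K_1\dis(G-u)$ and contradicting $1$-indecomposability. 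Pushing this dichotomy through requires leveraging the connectedness of $\overline{G}$ to rule out "sparse" configurations of $W$, iterating the Kempe/clique-enlargement argument until the required $k$-clique or universal vertex emerges. This inductive bookkeeping is the combinatorial heart of Gallai's proof and is where the real technical work lies.
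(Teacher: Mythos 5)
Your reduction to the complement form (Theorem~\ref{Chap5:Theo:Gallai-vertex-critical}) is fine, and the first half of your argument is correct: with a $k$-coloring maximizing the number of singleton classes, the singleton set $S$ is a clique with $|S|\geq 2k-n\geq 2$, each $u\in S$ meets every other class, the Kempe swap forces $|N_G(u)\cap V_j|\in\{1,|V_j|\}$, and in either case one obtains a clique $S\cup\{w\}$ avoiding some vertex $v$, whence $|S|\leq \chi(G-v)-1=k-2$ and $n\geq k+2$. But from that point on you have a plan, not a proof, and the plan has a concrete hole. You correctly extract, for each non-singleton class $V_j$, a vertex $w_j$ adjacent to all of $S$, but you give no reason whatsoever that the $w_j$ for \emph{different} classes are pairwise adjacent; without that, $S\cup W$ is not a clique and alternative (i) does not follow, while alternative (ii) (a universal vertex in $S$) would require every $u\in S$ to satisfy $N_G(u)\cap V_j=V_j$ for \emph{every} $j$, which nothing you proved forces. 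The phrases ``iterate the dichotomy,'' ``leverage the connectedness of $\overline{G}$,'' and ``inductive bookkeeping'' are placeholders: there is no induction hypothesis, no decreasing quantity, and no case analysis covering the range $k+2\leq n\leq 2k-2$, which for $k\geq 4$ is the entire content of the theorem. As written, the argument proves only $n\geq k+2$.

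You should also be aware that the paper takes a different route and never proves this statement by Kempe chains on a single coloring. It records Theorem~\ref{Chap5:Theo:Gallai-cover-critical} as a known, cited result (equivalent to Theorem~\ref{Chap5:Theo:Gallai-vertex-critical} by complementation) and instead gives a self-contained proof of the generalization Theorem~\ref{Chap5:Theo:Gallai-pt} in Section~6, following Stehl\'ik: one compares \emph{two} optimal colorings $\f_1,\f_2$, forms the hypergraph $H(\f_1,\f_2)$ whose edges are their color classes of size at least two, and walks along a shortest alternating path between two isolated vertices of $\f_1$ to build a coloring with strictly fewer singletons, concluding that some optimal coloring has exactly one singleton class and hence $|G|\geq 2(k-1)+1$. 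That argument uses only vertex-criticality and connectedness of the ($t$-)complement, so specializing it to $t=1$ gives exactly the statement you are after; if you want a complete proof, adapting Section~6 is far more promising than trying to finish the clique-enlargement scheme above.
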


There are three known proofs of Gallai's result. The original proof given by T. Gallai applies matching theory to $\ocn$-vertex-critical graphs; so he first proved Theorem~\ref{Chap5:Theo:Gallai-cover-critical} and obtained Theorem~\ref{Chap5:Theo:Gallai-vertex-critical} as a corollary. The second proof is due to Molloy \cite{Molloy99}; he applies Berge's version of Tutte's perfect matching theorem to $\ocn$-vertex-critical graphs. A third proof is due to Stehl\'ik \cite{Stehlik03}; his proof also deals with $\ocn$-vertex-critical graphs, but the proof uses no matching theory. Stiebitz and Toft \cite{StiebitzT2016} adapted Stehl\'ik's argument to give a direct proof of Gallai's result from first principles. This proof can be easily extended to $\cn$-critical hypergraphs, see the paper by Stiebitz, Storch, and Toft \cite{StiebitzST2016}. Our first main result is an immediate consequence of the decomposition result for hypergraphs.

\smallskip

A hypergraph $H$ is a pair of sets,
$V(H)$ and $E(H)$, where $V(H)$ is finite and $E(H)$
is a subset of $2^{V(H)}$ such that $|e|\geq 2$ for all $e\in E(H)$.
The set $V(H)$ is the \DF{vertex set} of $H$ and its elements are the \DF{vertices} of $H$. The set $E(H)$ is the \DF{edge set} of $H$ and its elements are the \DF{edges} of $H$. An edge $e$ with $|e|\geq 3$ is called a \DF{hyperedge}, and an edge $e$ with $|e|=2$ is called an \DF{ordinary edge}. So our hypergraphs have no loops and no parallel edges. A hypergraph $H$ is called \DF{simple} if no edge of $H$ is contained in another edge of $H$. A \DF{coloring} of a hypergraph $H$ with \DF{color set} $\Ga$ is a mapping $\f:V(G) \to \Ga$; we call $\f$ a \DF{proper coloring} of $H$ if no edge of $H$ is monochromatic with respect to $\f$, that is, $|\f(e)|\geq 2$ for every edge $e\in E(H)$. The \DF{chromatic number} of $H$, denoted by $\cn(H)$, is the least integer $k$ such that $H$ has a proper coloring with a set of $k$ colors. The hypergraph $H$ is called \DF{$\cn$-critical} if $\cn(H')<\cn(H)$ for every proper subhypergraph $H'$ of $H$; we call $H$ \DF{$\cn$-vertex-critical} if $\cn(H-v)<\cn(H)$ for all vertices $v$ of $H$. Clearly, each $\cn$-critical hypergraph is $\cn$-vertex-critcal, but not conversely. If $H_1$ and $H_2$ are two vertex disjoint hypergraphs, then $H=H_1\dis H_2$ denotes the hypergraph with $V(H)=V(H_1) \cup V(H_2)$ and $E(H)=E(H_1)\cup E(H_2) \cup \set{\{u,v\}}{u\in V(H_1),v\in V(H_2)}$, we the call $H$ the \DF{Dirac join} of $H_1$ and $H_2$. The following result is due to Stiebitz, Storch, and Toft \cite{StiebitzST2016}, and generalizes Gallai's decomposition result.

\begin{theorem}
If $H$ is a $\cn$-critical hypergraph with $|V(H)|\leq 2\cn(H)-2$, then $H$ is the Dirac join of two non-empty hypergraphs.
\label{Chap5:Theo:Gallai-hyper}
\end{theorem}

\begin{theorem}
If $G$ is a $\pa_t$-critical graph, whose $t$-complement is connected, then $|G|\geq 2\pa_t(G)-1$. Equivalently, if $G$ is a $\pa_t$-critical graph with $|G|\leq 2\pa_t(G)-2$, then $G$ is $t$-decomposable.
\label{Chap5:Theo:Gallai-pt}
\end{theorem}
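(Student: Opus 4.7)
The plan is to follow the method of Stehl\'ik~\cite{Stehlik03}, as re-worked from first principles by Stiebitz and Toft~\cite{StiebitzT2016} and extended to hypergraphs in~\cite{StiebitzST2016}, and argue by contradiction. Suppose $G\in \CR_t(k)$ with $n=|G|\leq 2k-2$ and $\overline{G}^t$ connected; the cases $k\leq 2$ are routine (for $k=1$ note $G=K_1$; for $k=2$, criticality together with $\mu(G)\leq t$ forces $n=2$ and $\overline{G}^t$ to be edgeless, contradicting connectivity), so we may take $k\geq 3$. By Proposition~\ref{Chap3:Prop:coloring-property} we have $\de(G)\geq t(k-1)$ and $\mu(G)\leq t$, so $\overline{G}^t$ is a well-defined loopless graph on $V(G)$.

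The first technical step I would establish is a \emph{singleton lemma}: for every $v\in V(G)$ and every $\f\in \CO_t(G-v,k-1)$, at least one color class has size $1$. This is a pigeonhole count --- the $k-1$ non-empty classes partition $n-1\leq 2k-3$ vertices, forcing at least $2(k-1)-(n-1)=2k-n-1\geq 1$ singletons. For any singleton $\{u\}$, Proposition~\ref{Chap3:Prop:coloring-property}(a) gives $|E_G(v,u)|\geq t$ and part (c) gives $\mu_G(u,v)\leq t$, so $\mu_G(u,v)=t$, i.e., $u$ and $v$ are non-adjacent in $\overline{G}^t$. Equivalently, any vertex $u\neq v$ with $\mu_G(u,v)<t$ is forbidden from being a singleton of any $\f\in \CO_t(G-v,k-1)$.

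Next, I would exploit the connectivity of $\overline{G}^t$ via a recoloring / vertex-swap argument. Pick a leaf $u$ of a spanning tree $T$ of $\overline{G}^t$ with neighbor $w$ in $T$; then $\mu_G(u,w)<t$. Take any $\f\in \CO_t(G-w,k-1)$; by the singleton lemma $\f$ has a singleton $\{u'\}$, and by the remark above $u'\neq u$. The plan is to modify $\f$ by a sequence of color exchanges, propagated along the $u$--$u'$ path of $T$, until a $\cDG_t$-coloring of $G-w$ is reached in which $\{u\}$ is a singleton. The singleton lemma applied to this new coloring then forces $\mu_G(u,w)=t$, contradicting the choice of $u$ as a $T$-leaf and finishing the proof.

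The main obstacle will be ensuring that each local exchange preserves the $\cDG_t$-structure of the affected color classes. In the classical case $t=1$ (chromatic number, as in Stehl\'ik's proof) the color classes are independent sets and swapping a vertex with a non-neighbor trivially preserves independence. For $t\geq 2$ a color class may contain internal edges (up to strict $t$-degeneracy), and a careless swap might create a subgraph of minimum degree $\geq t$. To control this I would invoke Theorem~\ref{Chap3:Theo:Lowvertexsubgraph} --- which classifies the block structure of the low-vertex subgraph of any $\pa_t$-critical graph --- together with the hereditary property $G'\subseteq G\in \cDG_t\Rightarrow G'\in \cDG_t$. Local violations of strict $t$-degeneracy introduced by a swap would be healed by further swaps within the described block structure; showing that this cascade terminates with the desired singleton configuration (rather than cycling indefinitely) is the delicate combinatorial heart of the argument.
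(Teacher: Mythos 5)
Your setup is sound and, up to the point where the real work begins, it coincides with the paper's strategy: the pigeonhole ``singleton lemma'', the observation that a singleton class $\{u\}$ in a coloring of $G-v$ forces $\mu_G(u,v)=t$ (equivalently, Proposition~\ref{Chap3:Prop1}(a): the set of singleton classes of an optimal coloring of $G$ is a $t$-fold clique), and the plan to use the connectivity of $\overline{G}^t$ to drag a singleton onto a vertex where it cannot sit. But the proof stops exactly where the theorem lives. The entire content of the result is the claim that the singleton can be relocated along a path of $\overline{G}^t$ by ``a sequence of color exchanges'', and you explicitly defer this: you acknowledge that a single-vertex swap between color classes can destroy strict $t$-degeneracy, propose to ``heal'' the damage via Theorem~\ref{Chap3:Theo:Lowvertexsubgraph}, and admit that termination of the resulting cascade is unproved. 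That is a genuine gap, and the proposed repair tool is the wrong one: Theorem~\ref{Chap3:Theo:Lowvertexsubgraph} describes the blocks of the subgraph induced by the low vertices of $G$; it says nothing about the internal structure of the color classes of an arbitrary $\cDG_t$-coloring, so it gives no control over which vertices can be moved where, and no potential function that would make a cascade of local repairs terminate.

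The paper avoids vertex-by-vertex recoloring altogether. Given two optimal $\cDG_t$-colorings $\f_1,\f_2$ of $G$, it forms the hypergraph $H(\f_1,\f_2)$ whose edges are the color classes of size at least $2$ of either coloring, and swaps the two colorings on the vertex set $X$ of a single component: $\f_3=\f_1|_X\cup\f_2|_{\oX}$. Since $X$ is closed under both colorings, every class of $\f_3$ is a full class of $\f_1$ or of $\f_2$, so validity and optimality are automatic and the degeneracy issue you worry about never arises (Proposition~\ref{Chap3:Prop3}). The control over \emph{which} singletons survive the swap does not come from degree counting but from working with $v$-extreme colorings (optimal colorings with $v$ a singleton and the number of singletons minimized) and a shortest-path/parity argument inside $H(\f_1,\f_2)$ showing each component meets $I(\f_1)$ in at most one vertex (Claim~\ref{Chap3:Claim1}); a single $\overline{G}^t$-edge step is then Claim~\ref{Chap3:Claim2}, and your ``propagate along the spanning tree'' step is Claim~\ref{Chap3:Claim3}. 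Without some substitute for this component-swap mechanism and the extremality bookkeeping, your outline does not yet constitute a proof.
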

\begin{proof}
For the graph $G$, we construct a hypergraph $H$ as follows. The vertex set is $V(H)=V(G)$, and a subset $e\subseteq V(G)$ is an edge of $H$ if and only if $G[e]\not\in \cDG_t$, but $G[e]-v\in \cDG_t$ for every $v\in V(G)$. Clearly, $H$ is a simple hypergraph. Furthermore, $\f:V(G)\to \Ga$ is a map, then $\f$ is an $\cDG_t$-coloring of $G$ if and only if $\f$ is a proper coloring of $H$. Consequently $\cn(H)=\pa_t(G)$. Since $G$ is $\pa_t$-critical, it follows that $H$ is $\cn$-vertex-critical. By deleting edges of $H$, we obtain a $\cn$-critical hypergraph $H'$ with $\cn(H')=\cn(H)$ and $V(H')=V(H)$. hence $|V(H')|=|G|\leq 2\pa_t(G)-2=2\cn(H')-2$. Then Theorem~\ref{Chap5:Theo:Gallai-hyper} implies that $H'$ is the Dirac join of two non-empty hypergraphs. Since $H$ is a simple hypergraph and $V(H)=V(H')$, the same holds for $H$. Clearly, if $e$ is an ordinary edge of $H$, then $G[e]=tK_2$. Hence $G$ is the Dirac $t$-join of two non-empty graphs. This proves the theorem.
\end{proof}

\section{Critical graphs whose order is near to $\pa_t$}
\label{Sect:Critical_near_to_order}

Let $G$ be a graph belonging to $\cMG_t$. A non-empty subgraph $H$ of $G$ is called \DF{$t$-dominating}, if there is a non-empty subgraph $G'$ such that $G=H\dit G'$. Clearly, any $t$-dominating subgraph of $G$ is an induced subgraph of $G$. Suppose that $G$ is $\pa_t$-critical. Then any $t$-dominating subgraph of $G$ is $\pa_t$-critical, too (by Theorem~\ref{Chap4:Theo:Dirac-join}). If $H$ is a $t$-dominating subgraph of $G$ with $\pa_t(H)=1$, then $H=K_1$.
For $t\geq 2$, let $K_3(t)$ denote the graph $\tfrac{t}{2}K_3$ if $t$ is even, and $\tfrac{t+1}{2}K_3$ minus an edge if $t$ is odd. Then it is easy to check that $\CR_t(2,1)=\ems$, $\CR_t(2,2)=\{tK_2\}$, $\CR_1(2,n)=\ems$ provided that $n\geq 3$, and $\CR_t(2,3)=\{K_3(t)\}$ for $t\geq 2$. Note that $\CR_t(0)=\{\ems \}$. We shall apply Theorem~\ref{Chap5:Theo:Gallai-pt} to deduce the following result. The case $t=1$ of this result was obtained by Gallai \cite{Gallai63b}.

\begin{theorem}
Let $G$ be a $\pa_t$-critical graph with $\pa_t(G)=k$ and $k\geq 1$, let $p$ be the number of $t$-dominating subgraphs of $G$ belonging to $\CR_t(1)$, and let $q$ be the number of $t$-dominating subgraphs of $G$ belonging to $\CR_t(2)$ and having order at least $3$. Then the following statements hold:
\begin{itemize}
\item[{\rm (a)}] $0\leq p \leq k$ and there exists a graph $G'\in \CR_t(k-p)$ such that $G=tK_p\dit G'$, $G'$ has no $t$-dominating subgraph isomorphic to $K_1$, and $|G'| \geq \tfrac{3}{2}(k-p).$ Furthermore, $p\geq 3k-2|G|$ and equality holds if and only if $t\geq 2$ and $G'$ is the Dirac $t$-join of $\tfrac{1}{2}(k-p)$ disjoint subgraphs of $G$ each of which is isomorphic to $K_3(t)$.
\item[{\rm (b)}] $0 \leq p+2q\leq k$ and there exists a graph $G_1\in \CR_t(2q)$ and a graph $G_2\in \CR_t(k-p-2q)$
such that $H=tK_p \dit G_1 \dit G_2$, $G_1$ is the Dirac sum of $q$ graphs each of which belongs to $\CR_t(2)$ and has order at least 3, $G_2$ has no $t$-dominating subgraph belonging to $\CR_t(1) \cup \CR_t(2)$, and $|G_2|\geq \tfrac{5}{3}(k-p-2q)$. Furthermore, $2p+q\geq 5k-3|G|$ and equality holds if and only if $t\geq 2$, $G_1$ is the Dirac $t$-join of $q$ disjoint subgraphs of $G$ each of which is isomorphic to $K_3(t)$, and $G_2$ is the Dirac $t$-join of $\tfrac{1}{3}(k-p-2q)$ disjoint subgraphs of $G$ each of which belongs to $\CR_t(3,5)$.
\end{itemize}
\label{Chap7:Theo:Gallai-pt-zwei}
\end{theorem}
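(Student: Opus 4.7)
The plan is to exploit the unique decomposition of $G$ into $t$-indecomposable pieces via the Dirac $t$-join. If $C_1, \ldots, C_r$ are the components of the $t$-complement $\overline{G}^t$, with vertex sets $V_1, \ldots, V_r$, then $G = H_1 \dit \cdots \dit H_r$ with $H_i := G[V_i]$, and each piece $H_i$ is $t$-indecomposable because $\overline{H_i}^t = C_i$ is connected. By iterated application of Theorem~\ref{Chap4:Theo:Dirac-join} (and associativity of the Dirac join), each $H_i$ is itself $\pa_t$-critical and $\pa_t(G) = \sum_i \pa_t(H_i)$. Moreover, a subgraph $H$ of $G$ is $t$-dominating precisely when $V(H)$ is a nonempty proper union of the $V_i$.

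From this dictionary, the $t$-dominating $K_1$-subgraphs correspond exactly to the pieces of order one, while a $t$-dominating subgraph in $\CR_t(2)$ of order at least three must consist of a single piece of $\pa_t$-value two (a union of two $K_1$-pieces would be $tK_2$, of order two). Hence $p$ equals the number of order-one pieces, $q$ equals the number of pieces with $\pa_t = 2$, and $s := r - p - q$ is the number of remaining pieces, all with $\pa_t \geq 3$. Letting $G_1$ be the Dirac join of the $q$ pieces with $\pa_t = 2$ and $G_2$ the Dirac join of the $s$ remaining pieces, we obtain $G = tK_p \dit G_1 \dit G_2$, $\pa_t(G_1) = 2q$, $\pa_t(G_2) = k - p - 2q$, and by the same dictionary, $G_2$ has no $t$-dominating subgraph in $\CR_t(1) \cup \CR_t(2)$; merging $G_1$ and $G_2$ yields the $G'$ of part~(a).

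For the order bounds, apply Theorem~\ref{Chap5:Theo:Gallai-pt} to each piece with $\pa_t \geq 2$ to obtain $|H_i| \geq 2\pa_t(H_i) - 1$. The pieces of $G'$, numbering $r' := q + s$ with each $\pa_t \geq 2$, satisfy $r' \leq (k-p)/2$, so summing gives $|G'| \geq 2(k-p) - r' \geq \tfrac{3}{2}(k-p)$ and hence $p \geq 3k - 2|G|$. For part~(b), the $s$ pieces of $G_2$ each have $\pa_t \geq 3$, so $s \leq (k-p-2q)/3$ and $|G_2| \geq 2(k-p-2q) - s \geq \tfrac{5}{3}(k-p-2q)$; combining with $|G_1| \geq 3q$ (every piece of $G_1$ has order at least three by Theorem~\ref{Chap5:Theo:Gallai-pt}) and $|G| = p + |G_1| + |G_2|$ yields $2p + q \geq 5k - 3|G|$.

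Equality in either bound forces every relevant summand to meet Gallai's bound $|H_i| = 2\pa_t(H_i) - 1$ at the minimum admissible $\pa_t$-value: namely $\pa_t = 2$ and order $3$, so the piece belongs to $\CR_t(2,3) = \{K_3(t)\}$ (requiring $t \geq 2$) for the $\CR_t(2)$-pieces, and $\pa_t = 3$ and order $5$, so the piece belongs to $\CR_t(3,5)$, for the remaining pieces of $G_2$. The main technical task is to verify cleanly the bijection between $t$-dominating subgraphs of $G$ and unions of components of $\overline{G}^t$, i.e., to show that any vertex set sending exactly $t$ edges to every external vertex is a union of components of $\overline{G}^t$; once this is established, the rest of the argument is a direct counting exercise driven by Theorem~\ref{Chap5:Theo:Gallai-pt} applied piecewise.
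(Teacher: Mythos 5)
Your proposal follows essentially the same route as the paper: decompose $G$ along the components of $\overline{G}^t$ into $t$-indecomposable $\pa_t$-critical pieces via Theorem~\ref{Chap4:Theo:Dirac-join}, apply Theorem~\ref{Chap5:Theo:Gallai-pt} to each piece to get $|H_i|\geq 2\pa_t(H_i)-1$, and sum, your bookkeeping ($|G'|\geq 2(k-p)-r'$ with $r'\leq \tfrac{1}{2}(k-p)$) being just a rearrangement of the paper's termwise bounds $|G_i|\geq \tfrac{3}{2}k_i$ and $|G_i|\geq \tfrac{5}{3}k_i$. The ``main technical task'' you defer --- that the $t$-dominating subgraphs of $G$ are exactly the induced subgraphs on nonempty proper unions of components of $\overline{G}^t$ --- is immediate from the definitions and is likewise taken for granted in the paper, so there is no real gap.
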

\begin{proof}
In what follows, let $G$ be an arbitrary graph belonging to $\CR_t(k)$ with $k\geq 1$. Then $G$ is a connected graph of $\cMG_t$ (by Proposition~\ref{Chap3:Prop:coloring-property}(c)(e)) and hence
$$G=G_1 \dit G_2 \dit \cdots  \dit G_s,$$
where $\overline{G}_1^t, \overline{G}_2^t, \ldots, \overline{G}_s^t$ are the components of $\overline{G}^t$. For $i\in \{1,2, \ldots, s\}$, let $k_i=\pa_t(G_i)$ and $n_i=|G_i|$. By Theorem~\ref{Chap4:Theo:Dirac-join}, we obtain that
\begin{itemize}
\item[{\rm (1)}] $k=k_1+k_2 + \cdots + k_t$ and $G_i\in \CR_t(k_i,n_i)$ for $i\in [1,s]$.
\end{itemize}
Since $\overline{G}_i^t$ is connected, Theorem~\ref{Chap5:Theo:Gallai-pt} implies that
\begin{itemize}
\item[{\rm (2)}] $|G_i|\geq 2k_i-1$ for $i\in [1,s]$.
\end{itemize}
Since $\CR_t(1)=\{K_1\}$, $\CR_t(2,2)=\{tK_2\}$, $\CR_1(2,3)=\ems$, and $\CR_t(2,3)=\{K_3(t)\}$ for $t\geq 2$, we obtain that $k_i=1$ and $G_i=K_1$, or $k_i=2$ and $|G_i|\geq 3$ (where equality holds if and only if $t\geq 2$ and $G_i=K_3(t)$), or $k_i\geq 3$ and $|G_i|\geq 5$. For a subset $I$ of $[1,s]$, let $G_I=\dit_{i\in I}G_i$ be the Dirac $t$-join of the graphs $G_i$ with $i\in I$, and let $k_I=\sum_{i\in I}k_i,$ where $G_{\ems}=\ems$ and $k_{\ems}=0$. By Theorem~\ref{Chap4:Theo:Dirac-join}, $G_I\in \CR_t(k_I)$. Let $P=\set{i\in [1,s]}{k_i=1}$, $Q=\set{i\in [1,s]}{k_i=2}$, $R=[1,s]\sm (P \cup Q)$, $p=|P|$, $q=|Q|$, and $r=|R|$. Then $P,Q$ and $R$ are pairwise disjoint sets whose union is $[1,s]$. Thus we obtain that
\begin{itemize}
\item[{\rm (3)}] $G=G_P\dit G_Q \dit G_R$, where $G_P=tK_p$ and $G_Q\in \CR_t(2q)$.
\end{itemize}
Note that $p$ is the number of $t$-dominating subgraphs of $G$ belonging to $\CR_t(1)$, and $q$ is the number of $t$-dominating subgraphs of $G$ belonging to $\CR_t(2)$ and having order at least $3$. In particular, $q=0$ if $t=1$.

First let us establish a lower bound for $p$. So let $\overline{P}=[1,s] \sm P$. Then $\overline{P}=R \cup Q$ and $G=G_P\dit G_{\overline{P}}$.
For $i\in \overline{P}$, we have that $k_i\geq 2$ and so, by (2), $|G_i|\geq 2k_i-1\geq \tfrac{3}{2}k_i$, where equality holds if and only if $t\geq 2$ and $G_i=K_3(t)$. By Theorem~\ref{Chap4:Theo:Dirac-join} and (1), we conclude that $k_P=p$ and $k_{ \overline{P}}=k-p$. For the order of $G$, it then follows from (1) and (2) that
$$|G|=p+\sum_{i\in \overline{P}}|G_i|\geq p+\tfrac{3}{2}\sum_{i\in \overline{P}}k_i= p+\tfrac{3}{2}(k-p),$$
which is equivalent to $p\geq 3k-2|G|$. Clearly, $p= 3k-2|G|$ if and only if $t\geq 2$ and $G_{\overline{P}}$ is the Dirac $t$-join of $\tfrac{1}{2}(k-p)$ disjoint $K_3(t)$'s. This proves (a).

For $i\in R$, we have $k_i\geq 3$ and so, by (2), $|G_i|\geq 2k_i-1\geq \tfrac{5}{3}k_i$, where equality holds if and only if $G_i\in \CR_t(3,5)$.
By Theorem~\ref{Chap4:Theo:Dirac-join}, we have $k_P=p$, $k_Q=2q$, and $k_R=k-p-2q$.
For the order of $G$ we then obtain that
$$|G|=p+\sum_{i\in Q}|G_i|+\sum_{i\in R}|G_i|\geq p+3q+\tfrac{5}{3}\sum_{i\in R}k_i=p+3q+\tfrac{5}{3}(k-p-2q),$$
which is equivalent to $2p+q\geq 5k-3|G|$. Clearly, $2p+q= 5k-3|G|$ if and only if $t\geq 2$, $G_i=K_3(t)$ for all $i\in Q$ and $G_i\in \CR_t(3,5)$ for all $i\in R$. Thus (b) is proved.
\end{proof}

For a graph $K\in \cMG_t$ and a class of graphs $\cG\subseteq \cMG_t$, define the class $K\dit \cG$ by $K\dit \cG=\set{K\dit G}{G\in \cG}$ if $\cG\not= \ems$, and $K \dit \cG=\ems$ otherwise. If $\cG$ is a graph property, then we do not distinguish between isomorphic graphs, so we are only interested in the number of isomorphism types of $\cG$, that is, the number of equivalence classes of $\cG$ with respect to the isomorphism relation for graphs.

The number of isomorphism types of the class $\CR_t(k,n)$ is finite, where $\CR_t(k,n)=\ems$ if $n<k$ and $\CR_t(k,k)=\{tK_k\}$. Furthermore, $\CR_t(1,n)=\ems$ if $n>1$, $\CR_t(2,3)=\{K_3(t)\}$ if $t\geq 2$ and $\CR_1(2,3)=\ems$.

From Theorem~\ref{Chap7:Theo:Gallai-pt-zwei}(a) we conclude that $\CR_t(k,k+1)=K_1\dit \CR(k-1,k)$ if $k\geq 3$, which implies by induction on $k$ that if $k\geq 2$, then
\begin{equation}
\label{Chap7:Equ:CReven(k,k+1)}
\CR_t(k,k+1)=\{tK_{k-2}\dit K_3(t)\} \mbox{ if $t\geq 2$, and } \CR_1(k,k+1)=\ems.
\end{equation}
For the rest of this section, assume that $t\geq 2$. For the class $\CR_t(4,6)$ we then conclude from Theorem~\ref{Chap7:Theo:Gallai-pt-zwei}(b) that
$$\CR_t(4,6)=(K_1\dit \CR_t(3,5)) \cup \{K_3(t) \dit K_3(t)\}.$$
By Theorem~\ref{Chap7:Theo:Gallai-pt-zwei}(a), it follows that $\CR(k,k+2)=K_1\dit \CR(k-1,k+1)$ if $k\geq 5$, which implies by induction on $k$ that
$$\CR(k,k+2)=(tK_{k-4}\dit K_3(t) \dit K_3(t)) \cup (tK_{k-3}\dit \CR(3,5))$$
if $k\geq 4$. If $n=k+3$, then we conclude from Theorem~\ref{Chap7:Theo:Gallai-pt-zwei}(b) that
\begin{eqnarray*}
\CR_t(5,8)&=&(K_1\dit \CR_t(4,7)) \cup (K_3(t)\dit\CR_t(3,5)) \cup \CR',
\end{eqnarray*}
where $\CR'=tK_2\dit \CR_t(3,6)$,
and from Theorem~\ref{Chap7:Theo:Gallai-pt-zwei}(a) we get
$$\CR_t(6,9)=(K_1\dit \CR_t(5,8))\cup \{K_3(t) \dit K_3(t) \dit K_3(t)\}.$$
If $k\geq 7$, then Theorem~\ref{Chap7:Theo:Gallai-pt-zwei}(a) implies that $$\CR(k,k+3)=K_1 \dit \CR_t(k-1,k+2).$$ By induction on $k$, we then obtain that if $k\geq 6$ , then
\begin{eqnarray*}
\CR_t(k,k+3)&=&(tK_{k-4} \dit \CR_t(4,7)) \cup (tK_{k-5}\dit K_3(t) \dit \CR_t(3,5)) \cup \CR'',
\end{eqnarray*}
where
$$\CR''=tK_{k-6}\dit (K_1\dit \CR_t(5,8) \cup \{K_3(t) \dit K_3(t) \dit K_3(t) \}).$$

%\begin{figure}[htbp]
%\centering
% Use the relevant command for your figure-insertion program
% to insert the figure file.
% For example, with the option graphics use
%\includegraphics[height=7cm]{CR(3,5)_black.eps}
%
% If not, use
%\picplace{5cm}{2cm} % Give the correct figure height and width in cm
%
%\caption{Hypergraphs of $\CR(3,5)$}
%\label{Chap7:Fig:CR(3,5)}       % Give a unique label
%\end{figure}

\section{Critical graphs with few edges}

In this section we shall investigate the extremal function $\cre_t(\cdot,\cdot)$ defined by
$$\cre_t(k,n)=\min \set{|E(G)|}{G\in \CR_t(k,n)}$$
and the corresponding class of extremal graphs defined by
$$\Cre_t(k,n)=\set{G\in \CR_t(k,n)}{|E(G)|=\cre_t(k,n)},$$
where $k$ and $n$ are positive integers. From Proposition~\ref{Chap3:Prop:coloring-property}(c) it follows that
\begin{equation}
\label{Chap8:Equ:trivial}
\cre_t(k,n)\geq \tfrac{1}{2}t(k-1)n,
\end{equation}
and Theorem~\ref{Chap3:Theo:Lowvertexsubgraph} tells us when equality holds. The function $\cre_1(k,n)$ is well investigated, a survey about the many partial results obtained in this case can be found in the paper by Kostochka \cite{Kostochka2006}. That it is worthwhile to study the function $\cre_1(k,n)$ was first emphasized by Dirac \cite{Dirac57} and subsequently by Gallai \cite{Gallai63a,Gallai63b} and by Ore \cite{Ore67}. In 2014, Kostochka and Yancey \cite{{KostY14}} succeeded in determining the best linear approximation of the function $\cre_1(k,n)$.

\begin{theorem} [Kostochka and Yancey 2014]
If $n\geq k\geq 4$ and $n\not=k+1$, then
$$\cre_1(k,n)\geq \frac{(k+1)(k-2)n-k(k-3)}{2(k-1)},$$
where equality holds if $\mdo{n}{1}{k-1}$. As a consequence, we have that
$$\lim_{n\to \infty}\frac{\cre_1(k,n)}{n}=\frac{1}{2}(k-\frac{2}{k-1})$$
\label{Chap8:Theo:KostYan}
\end{theorem}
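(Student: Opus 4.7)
The plan is to prove the equivalent reformulation $\rho_k(G) \leq k(k-3)$ for every $\chi$-critical graph $G$ of order $n \geq k$ with $n \neq k+1$, where
$$\rho_k(G) := (k+1)(k-2)|V(G)| - 2(k-1)|E(G)|.$$
A direct computation gives $\rho_k(K_k) = k[(k+1)(k-2) - (k-1)^2] = k(k-3)$, so $K_k$ saturates the bound. For the equality clause when $n \equiv 1 \pmod{k-1}$, I would exhibit a family of extremal graphs built from $K_k$ by iterated Haj\'os joins (Theorem~\ref{Chap4:Theo:Hajos-join}); each such step adds $k-1$ vertices and $(k+1)(k-2)/2$ edges, preserving $\rho_k = k(k-3)$.

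For the lower bound I would proceed by minimal counterexample. Suppose $G$ is $\chi$-critical with $\rho_k(G) > k(k-3)$, $|V(G)| = n \geq k$, $n \neq k+1$, and $n$ chosen minimum. By Proposition~\ref{Chap3:Prop:coloring-property}(c), $\delta(G) \geq k-1$; for each $v \in V(G)$, Proposition~\ref{Chap3:Prop:coloring-property}(a) supplies a $(k-1)$-coloring $\varphi$ of $G - v$ in which every color class meets $N_G(v)$. The reduction step selects a suitable proper subset $R \subsetneq V(G)$ with $|R| \geq 2$, forms an auxiliary graph $G^*$ by identifying $R$ to a single vertex and deleting the resulting loops and parallel edges, and extracts from $G^*$ a $\chi$-critical subgraph $H$ with $\chi(H) = k$. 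Applying the minimality of $G$ to $H$ gives $\rho_k(H) \leq k(k-3)$; a careful accounting of the edges lost in the contraction then lifts this to $\rho_k(G) \leq k(k-3)$, contradicting the choice of $G$.

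When $k \leq n \leq 2k-2$ the reduction can be replaced by a direct appeal to Main Theorem~A: either $n = k$ and $G = K_k$, or $G = G_1 \dis G_2$ with $\chi(G_1) + \chi(G_2) = k$, so $|E(G)| = |E(G_1)| + |E(G_2)| + |V(G_1)||V(G_2)|$ and induction on each $G_i$ yields the bound. The main obstacle lies in the range $n \geq 2k-1$, where the contracting set $R$ must be chosen sharply: dense enough that collapsing it sheds sufficient potential, yet loose enough that $G^*$ still contains a $k$-chromatic critical subgraph. Following the Kostochka--Yancey strategy, I would let $R$ be an induced subgraph of $G$ for which $\rho_k(G[R])$ is minimum among all properly $(k-1)$-colorable induced subgraphs, and then use the color classes of $\varphi$ both to guarantee the existence of such an $R$ and to bound the edges between $R$ and $V(G) \setminus R$. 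Ruling out degenerate choices (for example $R$ disjoint from $N_G(v)$ or $R = V(G) \setminus \{v\}$) and confirming that the extracted $H \subseteq G^*$ is genuinely $k$-chromatic is where the combinatorial delicacy of the proof concentrates; the limit statement then follows by dividing the main bound by $n$ and letting $n \to \infty$.
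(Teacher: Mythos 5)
The paper does not prove this statement at all: it is quoted verbatim from Kostochka and Yancey \cite{KostY14} as a known result, so there is no internal proof to compare your attempt against. Judged on its own terms, your proposal correctly identifies the potential-function strategy of the original paper -- defining $\rho_k(G)=(k+1)(k-2)|V(G)|-2(k-1)|E(G)|$, checking $\rho_k(K_k)=k(k-3)$, and arguing by minimal counterexample via contraction of a carefully chosen set $R$ -- and your bookkeeping for the Haj\'os-join construction of equality examples (each join with a $K_k$ adds $k-1$ vertices and $(k+1)(k-2)/2$ edges, leaving $\rho_k$ unchanged) is accurate. But the proposal is an outline, not a proof: the entire content of the Kostochka--Yancey argument lives in the steps you defer, namely the precise choice of $R$, the proof that the contracted graph $G^*$ still has chromatic number at least $k$, and the edge-accounting inequality that transfers $\rho_k(H)\leq k(k-3)$ back to $G$. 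None of these is carried out, and each requires several nontrivial lemmas in the original paper.

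Two further points need repair even at the level of the outline. First, in the range $k\leq n\leq 2k-2$ you propose to decompose $G=G_1\boxplus^1 G_2$ via Main Theorem~A and ``apply induction on each $G_i$''; but the inductive hypothesis bounds $e(G_i)$ via the potential $\rho_{k_i}$ with $k_i=\chi(G_i)<k$, not via $\rho_k$, so one must separately verify that the two bounds for $\rho_{k_1}$ and $\rho_{k_2}$ plus the $|V(G_1)||V(G_2)|$ join edges combine to give the $\rho_k$ bound for $G$; this is a computation, not an immediate consequence. Second, dividing the lower bound by $n$ only yields $\liminf_{n\to\infty}\cre_1(k,n)/n\geq \tfrac12(k-\tfrac{2}{k-1})$; the matching upper bound on the limit requires the equality construction for the arithmetic progression $n\equiv 1\pmod{k-1}$ \emph{together with} a subadditivity argument (e.g.\ via the Haj\'os join, as in \eqref{Chap8:Equ:ext(k,n+m)} for $t=2$) to control $\cre_1(k,n)$ for the intermediate values of $n$. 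As it stands, the proposal is a plausible roadmap to the Kostochka--Yancey theorem rather than a proof of it.
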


For the function $\cre_t(k,n)$ with $t\geq 2$ only two improvements of the trivial lower bound \eqref{Chap8:Equ:trivial} are known. Both improvements are due to \v{S}krekovski \cite{Skrekovski2002}, but he only considers simple graphs and $t=2$.

Based on Theorem~\ref{Chap5:Theo:Gallai-vertex-critical}, Gallai \cite{Gallai63b} established the exact
values for the function $\cre_1(k,n)$ including a description of the extremal classes $\Cre_1(k,n)$, provided that $k+2 \leq n \leq 2k-1$. For $k\geq 3$, let ${\cGD}(k)$ be the class of simple graphs $G$ whose vertex set consists  of three non-empty pairwise disjoint sets $X, Y_1$ and $Y_2$ with
    $$|Y_1|+|Y_2|=|X|+1=k-1$$
and two additional vertices $v_1$ and $v_2$ such that $G[X]$ and $G[Y_1 \cup Y_2]$ are complete graphs not joined by any edge in $G$, and $N_G(v_i)=X \cup Y_i$ for $i\in \{1,2\}$. Then it is easy to show that $\cGD(k)\subseteq \CR_1(k,2k-1)$. The class $\cGD(k)$ was discovered by Dirac \cite{Dirac74} and by Gallai \cite{Gallai63a}. Note that all graphs belonging to $\cGD(k)$ are $1$-indecomposable.

\begin{theorem} [Gallai 1963]
Let $n=k+p$ be an integer, where $k,p\in \nat$ and $2\leq p \leq k-1$.
Then $$\cre_1(k,n)={n \choose 2} - (p^2+1)=\frac{1}{2}((k-1)n+p(k-p)-2)$$
and $\Cre_1(k,n)=K_{k-p-1}\dis^1{\cGD}(p+1)$.
\label{Chap8:Theorem:Gallai2}
\end{theorem}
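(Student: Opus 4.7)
The plan is to handle Theorem~\ref{Chap8:Theorem:Gallai2} in two parts: an explicit extremal construction giving the upper bound, and a matching lower bound proved by induction on $n$ using Main Theorem~A (for $t=1$) to force a Dirac join. For the upper bound, pick any $G'\in\cGD(p+1)$ — non-empty since $p+1\geq 3$; by the definition of $\cGD$, $V(G')$ partitions as $X\cup Y_1\cup Y_2\cup\{v_1,v_2\}$ with $|X|=p-1$ and $|Y_1|+|Y_2|=p$, giving $|G'|=2p+1$, $\chi(G')=p+1$, and a direct edge count (the two cliques $K_{p-1}$ and $K_p$ plus the edges incident to $v_1,v_2$) yields $|E(G')|=p^2+p-1=\binom{2p+1}{2}-(p^2+1)$. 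Setting $H=K_{k-p-1}\dis^1 G'$, Theorem~\ref{Chap4:Theo:Dirac-join} gives $H\in\CR_1(k,n)$, and the identity $\binom{a}{2}+\binom{b}{2}+ab=\binom{a+b}{2}$ with $a=k-p-1$, $b=2p+1$ turns the edge count into $|E(H)|=\binom{n}{2}-(p^2+1)$. Hence $\cre_1(k,n)\leq\binom{n}{2}-(p^2+1)$ and $K_{k-p-1}\dis^1\cGD(p+1)\subseteq\Cre_1(k,n)$.

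For the lower bound, let $G\in\CR_1(k,n)$ with $n=k+p$, $2\leq p\leq k-1$, and write $\bar e(G)=\binom{n}{2}-|E(G)|$; we show $\bar e(G)\leq p^2+1$ by induction on $n$. Suppose $G$ is $1$-decomposable — which is automatic for $p\leq k-2$ by Main Theorem~A. Then $G=G_1\dis^1 G_2$ with $G_i\in\CR_1(k_i,n_i)$ (Theorem~\ref{Chap4:Theo:Dirac-join}), and since all cross-edges are present $\bar e(G)=\bar e(G_1)+\bar e(G_2)$. Write $p_i=n_i-k_i$, so $p_1+p_2=p$; the relation $\CR_1(k_i,k_i+1)=\ems$ forces $p_i\neq 1$. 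When $p_i=0$, $G_i=K_{k_i}$ and $\bar e(G_i)=0$. When $2\leq p_i\leq k_i-1$, induction yields $\bar e(G_i)\leq p_i^2+1$. In the remaining case $p_i\geq k_i$ (outside the theorem's range for $G_i$ alone, but possible as a summand), Proposition~\ref{Chap3:Prop:coloring-property}(c) gives $\delta(G_i)\geq k_i-1$ and hence $\Delta(\overline{G_i}^1)\leq p_i$, so $\bar e(G_i)\leq n_ip_i/2\leq p_i^2$. In every case $\bar e(G_i)\leq p_i^2+1$, and the elementary inequality $(p_1+p_2)^2+1\geq p_1^2+p_2^2+2$ — trivial when some $p_i=0$, reducing to $2p_1p_2\geq 1$ otherwise — closes the step.

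If $G$ is $1$-indecomposable, then Theorem~\ref{Chap5:Theo:Gallai-vertex-critical} forces $n\geq 2k-1$, so combined with $p\leq k-1$ we have $p=k-1$ and $n=2k-1$. The remaining — and genuinely nontrivial — task is to show $\bar e(G)\leq(k-1)^2+1$, with equality only when $G\in\cGD(k)$. This is the main obstacle: it is not a formal consequence of the tools assembled so far, but is instead the content of Gallai's original (1963) structural analysis. The argument uses Theorem~\ref{Chap3:Theo:Lowvertexsubgraph} (for $t=1$: every block of the low-vertex subgraph of $G$ is a complete graph or an odd cycle) together with $\delta(G)\geq k-1$ and the connectedness of $\overline{G}^1$, and proceeds by a delicate case analysis — or equivalently a matching-theoretic argument on $\overline{G}^1$ \emph{à la} Gallai — to bound $|E(\overline{G}^1)|$ by $(k-1)^2+1$ and to identify the extremal structure. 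Finally, tracking equality through the induction yields $\Cre_1(k,n)=K_{k-p-1}\dis^1\cGD(p+1)$: each decomposable step forces one summand to be some $K_{k_1}$ (so $p_1=0$) and the other to be extremal, terminating in the indecomposable base belonging to $\cGD(p+1)$.
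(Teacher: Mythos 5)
Your proposal cannot be checked against an in-paper proof, because the paper does not actually prove Theorem~\ref{Chap8:Theorem:Gallai2}: it is quoted from Gallai \cite{Gallai63b} as background, and the authors prove only the even-$t$ analogue, Theorem~\ref{Chap8:Theorem:Gallai3}. That said, your architecture is exactly the one the paper uses for Theorem~\ref{Chap8:Theorem:Gallai3}: an explicit extremal construction for the upper bound, plus an induction in which Main Theorem~A forces a Dirac decomposition whenever $p\leq k-2$, and the superadditivity of $p\mapsto p^2+1$ along $p=p_1+p_2$ (with $p_i\neq 1$) closes the inductive step. Your upper bound is correct ($e(G')=p^2+p-1={2p+1\choose 2}-(p^2+1)$ for $G'\in\cGD(p+1)$, whence $e(K_{k-p-1}\dis^1 G')={n\choose 2}-(p^2+1)$), and so is your treatment of the summands with $p_i=0$ and with $p_i\geq k_i$.

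The genuine gap is the one you flag yourself: the $1$-indecomposable case, which by Theorem~\ref{Chap5:Theo:Gallai-vertex-critical} occurs only for $p=k-1$, $n=2k-1$, but does occur there for every $k\geq 3$ (all graphs of $\cGD(k)$ are $1$-indecomposable). There you must show $e(\overline{G}^1)\leq (k-1)^2+1$, with equality only for $G\in\cGD(k)$, and you give no argument. This is not a technicality: the minimum-degree bound $\de(G)\geq k-1$ yields only $e(\overline{G}^1)\leq \tfrac{1}{2}(2p+1)p=p^2+\tfrac{p}{2}$, which exceeds $p^2+1$ for every $p\geq 3$, so the needed inequality is strictly stronger than anything Proposition~\ref{Chap3:Prop:coloring-property} or Theorem~\ref{Chap3:Theo:Lowvertexsubgraph} delivers once $k\geq 4$. (Only for $k=3$ does the degree bound suffice, where together with Theorem~\ref{Chap3:Theo:Brooks} it gives that the extremal graphs of $\CR_1(3,5)$ are exactly $C_5\in\cGD(3)$.) This missing case is precisely why the paper restricts its own Theorem~\ref{Chap8:Theorem:Gallai3} to even $t$: there the $n=2k-1$ case is settled by regularity and Theorem~\ref{Chap3:Theo:Brooks} via the graph $\tfrac{t}{2}K_{2k-1}$, whereas for $t=1$ no regular extremal graph exists and what is required is, in substance, Dirac's edge bound with its equality characterization (\cite{Dirac57}, \cite{Dirac74}) or Gallai's matching-theoretic analysis of $\overline{G}^1$ --- a self-contained argument of real length. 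Until that is supplied, what you have is a correct reduction of the theorem to its hardest case, not a proof.
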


Based on Theorem~\ref{Chap5:Theo:Gallai-pt} we shall prove a counterpart of Theorem~\ref{Chap8:Theorem:Gallai2}, but only when $t$ is even.

\begin{theorem}
Let $n=k+p$ be an integer, where $k,p\in \nat$ and $1\leq p \leq k-1$, and let $t$ be an even positive integer.
Then $$\cre_t(k,n)=t{n \choose 2} - \tfrac{t}{2}(2p+1)p=\tfrac{t}{2}(k^2-k+2kp-p^2-2p)$$
and $\Cre_t(k,k+p)=\{tK_{k-p-1}\dit \tfrac{t}{2}K_{2p+1}\}$.
\label{Chap8:Theorem:Gallai3}
\end{theorem}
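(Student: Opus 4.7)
My plan is to prove the formula $\cre_t(k,k+p) = f(k,p) := t\binom{n}{2} - \tfrac{t}{2}(2p+1)p$ together with the extremal classification by strong induction on $n = k+p$. Theorem~\ref{Chap5:Theo:Gallai-pt} provides the decomposition engine in the inductive step, and Proposition~\ref{Chap3:Prop:coloring-property}(c) supplies the base case.

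First I verify that the candidate $G^\ast := tK_{k-p-1} \dit \tfrac{t}{2}K_{2p+1}$ lies in $\CR_t(k,k+p)$ with exactly $f(k,p)$ edges. The substantive check is that $H := \tfrac{t}{2}K_{2p+1} \in \CR_t(p+1, 2p+1)$: because $\tfrac{t}{2}K_3$ is $t$-regular on three vertices and hence not strictly $t$-degenerate, any colour class of an $\cDG_t$-coloring of $H$ has at most two vertices, so $\pa_t(H) \ge \lceil (2p+1)/2\rceil = p+1$; a pairing realizes the matching colouring, and edge/vertex deletion (using that $\tfrac{t}{2}K_2$ is strictly $t$-degenerate for even $t$) confirms criticality. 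Then Theorem~\ref{Chap4:Theo:Dirac-join} yields $G^\ast \in \CR_t(k,k+p)$, and the decomposition $\binom{n}{2} = \binom{k-p-1}{2} + \binom{2p+1}{2} + (k-p-1)(2p+1)$ combined with $|E(\tfrac{t}{2}K_{2p+1})| = \tfrac{t}{2}\binom{2p+1}{2} = \tfrac{t}{2}(2p+1)p$ gives the edge count $f(k,p)$.

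For the lower bound the base case is $p = k-1$: Proposition~\ref{Chap3:Prop:coloring-property}(c) gives $\de(G) \ge t(k-1)$, hence $|E(G)| \ge \tfrac{1}{2}t(k-1)(2k-1) = f(k,k-1)$. In the inductive step $1 \le p \le k-2$ (so $n \le 2k-2$), Theorem~\ref{Chap5:Theo:Gallai-pt} forces $G = G_1 \dit G_2$ with both pieces non-empty; set $k_i = \pa_t(G_i)$, $n_i = |G_i|$, $p_i = n_i - k_i$, so $k_1+k_2 = k$ and $p_1 + p_2 = p$. If $p_i \le k_i - 1$ for both $i$, induction applies to each piece, and the Dirac-join identity $|E(G)| = |E(G_1)| + |E(G_2)| + tn_1n_2$ together with the algebraic identity
\[
(2p_1+1)p_1 + (2p_2+1)p_2 = (2p+1)p - 4p_1 p_2
\]
gives $|E(G)| \ge f(k,p)$ with equality iff $p_1 p_2 = 0$. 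If instead $p_1 \ge k_1$ (say), then $p_2 \le k_2 - 2$ so induction still handles $G_2$, while for $G_1$ I invoke the trivial bound $|E(G_1)| \ge \tfrac{t}{2}(k_1-1)n_1$ from Proposition~\ref{Chap3:Prop:coloring-property}(c); the residual slack works out to $\tfrac{tp_1}{2}\bigl[(p_1 + 1 - k_1) + 4p_2\bigr] \ge \tfrac{t}{2} > 0$, so $|E(G)|$ strictly exceeds $f(k,p)$.

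For the extremal classification, equality in the main sub-case forces $p_1 p_2 = 0$, say $p_1 = 0$; then $G_1 = tK_{k_1}$ and $G_2 \in \Cre_t(k_2, k_2 + p)$, and the inductive hypothesis delivers $G = tK_{k-p-1} \dit \tfrac{t}{2}K_{2p+1}$. At the base $p = k-1$, equality in Proposition~\ref{Chap3:Prop:coloring-property}(c) forces $G$ to be $t(k-1)$-regular, and Theorem~\ref{Chap3:Theo:Brooks} pins down $G = \tfrac{t}{2}K_{2k-1}$ for $k \ge 4$. The principal obstacle is the second sub-case of the inductive step, where one must verify cleanly that mixing the trivial degree bound on $G_1$ with the sharp inductive bound on $G_2$ still strictly beats $f(k,p)$; a secondary delicacy is the small-$k$ analysis in the base case, where for $k = 3$ the graph $tC_5$ appears as an additional extremal candidate that must be addressed in the uniqueness statement.
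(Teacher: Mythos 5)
Your lower-bound argument is correct and takes a genuinely different route from the paper. The paper fixes $k$ and peels off a single $t$-dominating piece according to its type: a $K_1$ (reduce $k$ by one), a member of $\CR_t(2)$ (a direct computation giving strict excess), or neither --- in which case it invokes a separate and rather heavy lemma (Theorem~\ref{Chap8:Theorem:half}), proved by bounding $e(\overline{G}^t)$ over the full decomposition into indecomposable parts. You replace all of this by one generic split $G=G_1\dit G_2$ from Theorem~\ref{Chap5:Theo:Gallai-pt} together with the identity $(2p_1+1)p_1+(2p_2+1)p_2=(2p+1)p-4p_1p_2$, using the trivial degree bound of Proposition~\ref{Chap3:Prop:coloring-property}(c) on any piece that falls outside the inductive range $p_i\le k_i-1$ (at most one piece can, since $p\le k-2$ forces $p_1\ge k_1$ and $p_2\ge k_2$ to be incompatible). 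I have checked both slack computations ($2tp_1p_2$ in the main sub-case, $\tfrac{t}{2}p_1[(p_1+1-k_1)+4p_2]$ in the other); they are right, and Theorem~\ref{Chap8:Theorem:half} becomes unnecessary. This is a cleaner argument; just state explicitly that a piece with $p_i=0$ is handled by $\CR_t(k_i,k_i)=\{tK_{k_i}\}$ rather than by the inductive hypothesis.

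Your ``secondary delicacy'' about $tC_5$ is, however, not a delicacy: it is a genuine counterexample to the uniqueness claim as stated, and the paper's own proof contains the same oversight. In the case $p=k-1$, equality forces $G$ to be $t(k-1)$-regular with $\pa_t(G)=k$, and Theorem~\ref{Chap3:Theo:Brooks} then allows, for $k=3$, the alternative $G=tC_5$ alongside $\tfrac{t}{2}K_5$; the paper silently discards this branch. Since $C_5$ is $\chi$-critical, $tC_5\in\CR_t(3,5)$ by Proposition~\ref{Chap3:Prop:chi(G)=pat(tG)}, and $e(tC_5)=5t=e(\tfrac{t}{2}K_5)=\cre_t(3,5)$, so $\Cre_t(3,5)\supseteq\{\tfrac{t}{2}K_5,\,tC_5\}$. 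Through the equality analysis (yours or the paper's) this propagates to $tK_{k-3}\dit tC_5\in\Cre_t(k,k+2)$ for every $k\ge 3$. The value of $\cre_t(k,n)$ claimed in the theorem is unaffected, but the classification $\Cre_t(k,k+p)=\{tK_{k-p-1}\dit\tfrac{t}{2}K_{2p+1}\}$ must be amended for $p=2$ to include $tK_{k-3}\dit tC_5$; for $p\ne 2$ your trace-through of equality does yield the stated unique extremal graph.
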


For the proof of the above theorem, the following result is useful. For a graph $G$, let $e(G)$ denote the number of edges of $G$.

\begin{theorem}
\label{Chap8:Theorem:half}
Let $t$ be a positive integer, and let $G\in \CR_t(k,n)$ be a graph with $n=k+p$ and $2\leq p \leq k-2$. If $G$ has no $t$-dominating subgraph belonging to $\CR_t(1) \cup \CR_t(2)$, then $e(G)\geq t{n \choose 2}-tp^2$.
\end{theorem}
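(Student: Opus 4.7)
The plan is to pass to the $t$-complement and exploit the Dirac decomposition. Since $e(G)+e(\overline{G}^t)=t{n\choose 2}$, the conclusion is equivalent to $e(\overline{G}^t)\leq tp^2$. The hypothesis $p\leq k-2$ gives $|G|\leq 2k-2$, so Theorem~\ref{Chap5:Theo:Gallai-pt} forces $\overline{G}^t$ to be disconnected; taking its components as the vertex sets of subgraphs $G_1,\dots,G_s$ yields $G=G_1\dit\cdots\dit G_s$ with $s\geq 2$, and by Theorem~\ref{Chap4:Theo:Dirac-join} each $G_i$ is $\pa_t$-critical, has connected $t$-complement, and is itself a $t$-dominating subgraph of $G$. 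The hypothesis that $G$ has no $t$-dominating subgraph in $\CR_t(1)\cup\CR_t(2)$ therefore forces $k_i:=\pa_t(G_i)\geq 3$, and Theorem~\ref{Chap5:Theo:Gallai-pt} applied inside each $G_i$ gives $n_i:=|G_i|\geq 2k_i-1$, equivalently $p_i:=n_i-k_i\geq k_i-1\geq 2$ for all $i$, with $\sum_i p_i=p$.

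Next I would bound each $e(\overline{G_i}^t)$ via a degree count. By Proposition~\ref{Chap3:Prop:coloring-property}(c) every vertex $v$ of $G_i$ satisfies $d_{G_i}(v)\geq t(k_i-1)$, so $d_{\overline{G_i}^t}(v)=t(n_i-1)-d_{G_i}(v)\leq tp_i$. Summing over $v$ and using $n_i\leq 2p_i+1$ yields
\[
e(\overline{G_i}^t)\;\leq\;\tfrac{t}{2}\,n_i p_i\;\leq\;\tfrac{t}{2}(2p_i+1)p_i\;=\;tp_i^2+\tfrac{t}{2}p_i.
\]
Because $\overline{G}^t$ is the disjoint union of the $\overline{G_i}^t$, summing over $i$ gives
\[
e(\overline{G}^t)\;=\;\sum_{i=1}^s e(\overline{G_i}^t)\;\leq\;t\sum_{i=1}^s p_i^2\,+\,\tfrac{t}{2}\,p.
\]

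The final step is the elementary inequality $\sum_i p_i^2\leq p^2-2p$. Since $p_i\geq 2$ for every $i$, each pair satisfies $(p_i-1)(p_j-1)\geq 1$, hence $p_i p_j\geq p_i+p_j$; summing over unordered pairs gives $\sum_{i<j}p_ip_j\geq(s-1)p\geq p$ (using $s\geq 2$), and from $p^2=\sum_i p_i^2+2\sum_{i<j}p_ip_j$ we conclude $\sum_i p_i^2\leq p^2-2p$. Substituting back produces $e(\overline{G}^t)\leq tp^2-\tfrac{3t}{2}p\leq tp^2$, which is the required bound. The main conceptual hurdle is deciding to work with $\overline{G}^t$ and to reduce to the indecomposable Dirac summands; once that reduction is made, the criticality-based degree bound and the estimate $(p_i-1)(p_j-1)\geq 1$ close the argument with room to spare.
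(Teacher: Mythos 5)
Your proof is correct, and it follows the same overall strategy as the paper: pass to the $t$-complement, use Theorem~\ref{Chap5:Theo:Gallai-pt} to decompose $G$ as the Dirac $t$-join $G_1\dit\cdots\dit G_s$ of the parts induced by the components of $\overline{G}^t$, note that the hypothesis forces $k_i\geq 3$ and hence $n_i\geq 2k_i-1$, and bound $e(\overline{G_i}^t)$ via the minimum-degree condition $\de(G_i)\geq t(k_i-1)$ from Proposition~\ref{Chap3:Prop:coloring-property}(c). Where you genuinely diverge is in the endgame. The paper splits the index set into $A=\{i: n_i=2k_i-1\}$ and $B=\{i: n_i\geq 2k_i\}$, lumps all of $G_B$ into a single summand, invokes a convexity estimate of the form $\sum_{i\in A}\binom{n_i}{2}\leq (a-1)\binom{5}{2}+\binom{n_A-5(a-1)}{2}$, and then finishes with a three-way case analysis on $(a,b)$. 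You instead treat every component uniformly: from $d_{\overline{G_i}^t}(v)\leq tp_i$ and $n_i\leq 2p_i+1$ you get $e(\overline{G_i}^t)\leq tp_i^2+\tfrac{t}{2}p_i$, and then the elementary superadditivity $\sum_i p_i^2\leq p^2-2p$ (valid because each $p_i\geq 2$ and $s\geq 2$) closes the argument, in fact yielding the slightly stronger bound $e(\overline{G}^t)\leq tp^2-\tfrac{3t}{2}p$. This is cleaner than the paper's computation and avoids its case distinctions entirely; the only thing the paper's more delicate bookkeeping might buy is sharper information in near-extremal configurations, which is not needed for the stated inequality.
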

\begin{proof}
Let $G\in \CR_t(k,n)$ be a graph with $n=k+p$ and $2\leq p \leq k-2$ such that $G$ has no $t$-dominating subgraph belonging to $\CR_t(1) \cup \CR_t(2)$. Note that $G\in \cMG_t$ and so $e(G)+e(\overline{G}^t)=t{n \choose 2}$. Our aim is to show that $e(G)\geq t{n \choose 2}-tp^2$, which is equivalent to $e(\overline{G}^t)\leq tp^2$. Since $n\leq 2k-2$ it follows from Theorem~\ref{Chap5:Theo:Gallai-pt} that $G$ is $t$-decomposable. Hence
$$G=G_1 \dit G_2 \dit \cdots  \dit G_s,$$
where $\overline{G}_1^t, \overline{G}_2^t, \ldots, \overline{G}_s^t$ are the components of $\overline{G}^t$, and $s\geq 2$. For $i\in [1,s]$, let $k_i=\pa_t(G_i)$, $n_i=|G_i|$, $m_i=e(G_i)$ and $\olm_i=e(\overline{G}_i^t)=t{n_i \choose 2}-m_i$. By Theorem~\ref{Chap4:Theo:Dirac-join}, we obtain that
$$k=k_1+k_2 + \cdots + k_s \mbox{ and } G_i\in \CR_t(k_i,n_i) \mbox{ for } i\in [1,s].$$
Since $G$ has no $t$-dominating subgraph belonging to $\CR_t(1) \cup \CR_t(2)$, $k_i\geq 3$ for $i\in [1,s]$. As $\overline{G_i}^t$ is connected, we obtain that $n_i\geq 2k_i-1\geq 5$ for $i\in [1,s]$ (by Theorem~\ref{Chap5:Theo:Gallai-pt}). For a subset $I$ of $[1,s]$, let
$$G_I=\dit_{i\in I}G_i, k_I=\sum_{i\in I}k_i, n_I=\sum _{i\in I}n_i \mbox{ and } \olm_I=\sum _{i\in I}\olm_i,$$
where the sum over the empty set is zero. By Theorem~\ref{Chap4:Theo:Dirac-join}, $G_I\in \CR_t(k_I,n_I)$. Our aim is to show
$$e(\overline{G}^t)=\olm_{[1,s]}\leq tp^2.$$
To this end, we divide the set $[1,s]$ into two disjoint subsets, namely $A=\set{i\in [1,s]}{ n_i=2k_i-1}$ and $B=\set{i\in [1,s]}{n_i\geq 2k_i}$. Let $a=|A|$ and $b=|B|$. Since $A \cap B=\ems$ and $A\cup B=[1,s]$, we obtain that
\begin{itemize}
\item[{\rm (1)}] $G=G_A\dit G_B$ and $a+b=s\geq 2$.
\end{itemize}
From (1) and Theorem~\ref{Chap4:Theo:Dirac-join} it follows
\begin{itemize}
\item[{\rm (2)}] $k=k_A+k_B$ and $n=n_A+n_B$.
\end{itemize}
The definition of $A$ implies that
\begin{itemize}
\item[{\rm (3)}] $n_i=2k_i-1, k_i\geq 3$ and $n_i\geq 5$ whenever $i\in A$,
\end{itemize}
from which we obtain that
\begin{itemize}
\item[{\rm (4)}] $k_A=\sum_{i\in A}k_i\geq 3a$ and $n_A=\sum_{i\in A}n_i=2k_A-a$.
\end{itemize}
Since $G_i\in \CR_t(k_i,n_i)$, we conclude that $2m_i\geq t(k_i-1)n_i$ (by Proposition~\ref{Chap3:Prop:coloring-property}(c)). Since $n_i=2k_i-1$ for $i\in A$ (by (3)), this leads to
\begin{itemize}
\item[{\rm (5)}] $2\olm_i=2 t{n_i \choose 2} -2m_i\leq tn_i(n_i-k_i)=t{n_i \choose 2}$ whenever $i\in A$.
\end{itemize}
By (3) and (4), this implies that
$$2\olm_A=\sum_{i\in A} 2\olm_i\leq t\sum_{i\in A}{n_i \choose 2}\leq t((a-1){5 \choose 2}+{n_A-5(a-1) \choose 2}),$$
which is equivalent to
\begin{itemize}
\item[{\rm (6)}] $2\olm_A \leq t(2(k_A-3a)^2+9(k_A-3a)+10a)$.

\end{itemize}
Note that this inequality also holds if $a=0$. If $b\geq 1$, then $G_B\in \CR_t(k_B,n_B)$, where
\begin{itemize}
\item[{\rm (7)}] $k_B\geq 3$
\end{itemize}
and
\begin{itemize}
\item[{\rm (8)}] $n_B\geq 2k_B$.
\end{itemize}
By Proposition~\ref{Chap3:Prop:coloring-property}(c), it follows that $2e(G_B)\geq t(k_B-1)n_B$.
Let $$\olm(G_B)=t{n_B \choose 2} -e(G_B) \mbox{ and } \olm=\olm_A+\olm(G_B).$$ Then
\begin{itemize}
\item[{\rm (9)}] $2\olm(G_B)\leq tn_B(n_B-k_B)=t((n_B-k_B)^2+k_B(n_B-k_B))$.
\end{itemize}
Note that (8) and (9) also hold if $b=0$. Using (6) and (9), we obtain that
\begin{itemize}
\item[{\rm (10)}] $2\olm\leq
    t(2(k_A-3a)^2+9(k_A-3a)+10a+(n_B-k_B)^2+k_B(n_B-k_B))$.
\end{itemize}
Clearly, it suffices to show that
\begin{itemize}
\item[{\rm (11)}] $\olm\leq tp^2$.
\end{itemize}
Using (2) and (4), we obtain that
$$p=n-k=n_A-k_A+n_B-k_B=k_A-a+n_B-k_B,$$
which yields
\begin{eqnarray*}
2p^2 &=& 2((k_A-3a)+2a)^2+2(n_B-k_B)^2+4(k_A-a)(n_B-k_B)\\
&=& 2(k_A-3a)^2+8a(k_A-3a)+8a^2+2(n_B-k_B)^2+\\
&& 4(k_A-a)(n_B-k_B).
\end{eqnarray*}
Together with (10), this leads to
\begin{eqnarray*}
(12) \quad 2(tp^2-\olm)&\geq& t((8a-9)(k_A-3a)+a(8a-10)+\\
&&(n_B-k_B)((n_B-2k_B)+4(k_A-a))).
\end{eqnarray*}
If $a\geq 2$, then (11) follows from (12), (4) and (8). If $a=1$ and $b\geq 1$, then $n_B-k_B\geq k_B\geq 3$ (by (7) and (8)). From (12), (4) and (8) we then conclude that
$$2(tp^2-\olm)\geq (-k_A+1+4k_B(k_A-1)>0.$$
If $a=0$ and $b\geq 2$, then $k_A=0$ and (11) follows from (12) and (8). Since $a+b=s\geq 2$, this shows that (11) holds.  This completes the proof of the theorem.
\end{proof}

\pff{Theorem~\ref{Chap8:Theorem:Gallai3}}{
Let $t$ be an even positive integer, and let $G\in \CR_t(k,n)$ be a graph with $n=k+p$ and $1 \leq p \leq k-1$. Furthermore, let $e(\cdot,\cdot)$ be the function defined by
$$e(k,p)=t{n \choose 2} - \tfrac{t}{2}(2p+1)p=\frac{t}{2}(k^2-k+2kp-p^2-2p).$$
Note that $e(k,p)$ is an integer. Our aim is to show that $e(G)\geq e(k,p)$ and that equality holds if and only if
$G=tK_{k-p-1}\dit \tfrac{t}{2}K_{2p+1}$. The proof is by induction on $k$. If $k=2$, then $p=1$ and so $G\in \CR_t(2,3)=\{\tfrac{t}{2}K_3\}$ (by \eqref{Chap7:Equ:CReven(k,k+1)} and $t$ even). Consequently, $e(G)=3t/2=e(2,1)$ and $G=\tfrac{t}{2}K_3$. This proves the basic case.

Now assume that $k\geq 3$. If $p=1$, then $G=tK_{k-2}\dit \tfrac{t}{2}K_3$ (by \eqref{Chap7:Equ:CReven(k,k+1)})
and $2e(G)=2e(k,1)=t(k^2+k-3)$, and we are done. If $p=k-1$, then $n=2k-1$ and $2e(k,k-1)=t(k-1)(2k-1)$. Since $G\in \CR_t(k,n)$, it follows from Proposition~\ref{Chap3:Prop:coloring-property}(c) that $\de(G)\geq t(k-1)$ and hence $2e(G)\geq t(k-1)n=t(k-1)(2k-1)=2e(k,k-1)$. If $e(G)=e(k,k-1)$, then $\De(G)=\de(G)=t(k-1)$ and $\pa_t(G)=k$, which implies, by Theorem~\ref{Chap3:Theo:Brooks}, that $G=\tfrac{t}{2}K_{2k-1}$. Hence we are done, too. It remains to consider the case when $2\leq p \leq k-2$. Note that this implies, in particular, that $k\geq 4$. Furthermore, Theorem~\ref{Chap5:Theo:Gallai-pt} implies that $G$ is $t$-decomposable.

\case{1}{$G$ has a $t$-dominating subgraph belonging to $\CR_t(1)$.} Then $G=K_1\dit G'$ and $G'\in \CR_t(k-1,n')$ (by Theorem~\ref{Chap4:Theo:Dirac-join}) with $n'=n-1=k+p-1$. Clearly, $e(G)=e(G')+tn'=e(G')+t(k+p-1)$. Furthermore, it is easy to check that $e(k-1,p)+t(k+p-1)=e(k,p)$.
From the induction hypothesis it follows that
$$e(G)=e(G')+t(k+p-1)\geq e(k-1,p)+t(k+p-1)=e(k,p).$$
Furthermore, $e(G)=e(k,p)$ is equivalent to $e(G')=e(k-1,p)$, which is equivalent to $G'=tK_{k-p-2}\dit \frac{t}{2}K_{2p+1}$ and, therefore, to $G=tK_{k-p-1}\dit \frac{t}{2}K_{2p+1}$. This settles the first case.

\case{2}{$G$ has a $t$-dominating subgraph belonging to $\CR_t(2)$, but no such graph belonging to $\CR_t(1)$.} Then, by Theorem~\ref{Chap4:Theo:Dirac-join}, $G=H\dit G'$ with $H\in \CR_t(2,q)$, $G'\in \CR_t(k-2,n')$, where $q\geq 3$ and $n'=k+p-q$. Since $G$ has no $t$-dominating $K_1$ as a subgraph, $n'\geq k-1$ and so $p-q\geq -1$. Since $p\leq k-2$ and $q\geq 3$, we have $n'\leq 2k-5$. Hence the induction hypothesis implies that $e(G')\geq e(k-2,p-q+2)$. Since $H\in \CR_t(2,q)$, it follows that $\de(H)\geq t$ and so $2e(H)\geq tq$. Since $G=H\dit G'$, we obtain that
$$2e(G)=2e(H)+2e(G')+2tn'q\geq 2e(G')+tq(2(k+p-q)+1).$$
This leads to
\begin{eqnarray*}
2e(G)-2e(k,p) &\geq& 2e(k-2,p-q+2)-2e(k,p)+tq(2(k+p-q)+1)\\
              &=& t(-10-8p+11q+4pq-3q^2)\\
              &=& t(q-2)(3(p-q)+p+5)\geq t
\end{eqnarray*}
as $q\geq 3$, $p\geq 2$, and $p-q\geq -1$. Hence $e(G)>e(k,p)$ and we are done.

\case{3}{$G$ has no $t$-dominating subgraph belonging to $\CR_t(1) \cup \CR_t(2)$.} Then Theorem~\ref{Chap8:Theorem:half} implies that $e(G)\geq t{n \choose 2}-tp^2 \geq e(k,p)+1$ and we are done. This completes the proof of the theorem.
}

\section{Concluding remarks}

Let $t\in \nat$. Then $2\cre_t(k,n)\geq t(k-1)n$ (by Proposition~\ref{Chap3:Prop:coloring-property}(c)). If $t$ is even, then $\tfrac{t}{2}K_{2k-1}\in \CR_t(k,2k-1)$, which implies that $2\cre_t(k,2k-1)= t(k-1)(2k-1)$ and $\Cre_t(k,2k-1)=\{\tfrac{t}{2}K_{2k-1}\}$ (by Theorem~\ref{Chap3:Theo:Brooks}). This is the key observation for proving Theorem~\ref{Chap8:Theorem:Gallai3}. If $t$ is odd, it seems very likely that 
$$\Cre_t(k,k+p)=tK_{k-p-1}\dit \Cre_t(p+1,2p+1).$$ 
So it would be helpful to establish $\cre_t(k,2k-1)$ and $\Cre_t(k,2k-1)$. By Gallai's result (Theorem~\ref{Chap8:Theorem:Gallai2}), we know that $\Cre_1(k,2k-1)=\cGD(k)$. Let $\cGD_t(k)=\set{tG}{G\in \cGD(k)}$. Clearly, $\cGD_t(k)\subseteq \CR_t(k,2k-1)$ (by Proposition~\ref{Chap3:Prop:chi(G)=pat(tG)}). However, we do not know whether for odd $t$ we have $\cGD_t(k)\subseteq \Cre_t(k,2k-1)$. Note that $\cre_t(k,n)\leq t\cdot \cre_1(k,n)$ for all $n\geq k$ with $n\not=k+1$. It would be also interesting to investigate the function
$$\cre_t(k,n,m)=\min \set{e(G)}{G\in \CR_t(k,n)\cap \cMG_m}.$$
Clearly, $\cre_t(k,n,t)=\cre_t(k,n)$, $\cre_t(k,n,m)\geq \cre_t(k,n,m+1)$, and, moreover,  $\cre_t(k,n,t)\leq t\cdot \cre_1(k,n,1)$. As pointed out by Kostochka, Schweser, and Stiebitz \cite{KostochkaSS2019}, if $t\geq 1$, $k\geq 3$ and $n>kt+1$, then
$$2\cre_t(k+1,n,1)\geq \left( kt+\frac{kt-2}{(kt+1)^2-3}\right)n+\frac{2kt}{(kt+1)^2-3}.$$
The bound follows from Theorem~\ref{Chap3:Theo:Lowvertexsubgraph}. For $t=1$, this bound was established by Gallai \cite{Gallai63a}, and for $t=2$ the bound was established by \v{S}krekovski \cite{Skrekovski2002}. Note that $\Cre_t(k+1,tk+1)=\{K_{tk+1}\}$ (by Theorem~\ref{Chap3:Theo:Brooks}). G. A. Dirac obtained another bound for $\cre_1(k,n)$, which is, for small values of $n$, better than the Gallai bound. In 1957, Dirac~\cite{Dirac57} proved that every graph $G\in \CR_1(k,n)$
with $n\geq k+2\geq 5$ satisfies
$$2e(G)\geq (k-1)n+k-2$$
and in 1974, he proved in \cite{Dirac74} that equality holds if and only if $G\in \cGD(k)$. \v{S}krekovski \cite{Skrekovski2002} established a Dirac type bound for $t=2$, he proved that
$$2\cre_2(k,n,1)\geq 2(k-1)n+2(k-2) \mbox{ provided that } n\geq 2k\geq 6.$$
As pointed out by \v{S}krekovski \cite{Skrekovski2002}, $\cGD(2k-1)\subseteq \CR_2(k)$ and so $2\cre_2(k,n,1)=2(k-1)n+2(k-2)$ if $n=4k-3$.

In what follows let $k\geq 2$ be a fixed integer. Note that $\CR_2(k,n)\not= \ems$ if and only if $n\geq k$. If $G_1\in \Cre_2(k,n)$ and $G_2\in \Cre_2(k,n')$, then $G=G_1\has G_2$ belongs to $\CR_2(k,n+n'-1)$ (by Theorem~\ref{Chap4:Theo:Hajos-join}) and $e(G)=e(G_1)+e(G_2)-1$. As a consequence we obtain that
\begin{equation}
\label{Chap8:Equ:ext(k,n+m)}
\cre_2(k,n+n')\leq \cre_2(k,n)+\cre_2(k,n'+1)-1.
\end{equation}
By Fekete's lemma, this implies that that there is a constant $L_k$ such that
$$\lim_{n\to \infty} \frac{\cre_2(k,n)}{n}=L_k.$$
We have $\CR_2(2)=\set{C_n}{n\geq 2}$, which implies that $L_2=1$. Since $2C_{2p+1}\in \CR_2(3)$, we have $L_3=2$. So let $k\geq 4$. As $\Cre_2(k,2k-1)=\{K_{2k-1}\}$, we have $\cre_2(k,2k-1)=(2k-1)(k-1)$. By \eqref{Chap8:Equ:ext(k,n+m)}, this leads to
\begin{equation}
\label{Chap8:Equ:ext(k,n+m)N}
\cre_2(k,n+2k-2)\leq \cre_2(k,n)+(2k-1)(k-1)-1.
\end{equation}
It would be useful to know whether we have equality in \eqref{Chap8:Equ:ext(k,n+m)N}. If equality holds, this would lead to
$$L_k=\frac{2k-1}{2}-\frac{1}{2k-2}.$$
If this is true, then we have $\cre_2(k,n)=\cre_2(k,n,1)$ provided that $k\geq 4$. As shown in section four, the Haj\'os join well behaves with respect to the point aboricity $\pa_2$. The Haj\'os join not only preserves the point aboricity, but also criticality. However, this is not the case with respect to the $t$-chromatic number $\pa_t$ for $t\geq 3$.

\end{document}